\author{Benjamin McKay}
\title{Complex homogeneous surfaces}
\date{\today}
\address{School of Mathematical Sciences, University College Cork, Cork, Ireland}
\email{b.mckay@ucc.ie}
\thanks{This publication has emanated from activity conducted with the financial support of Science Foundation Ireland under the International Strategic Cooperation Award Grant Number SFI/13/ISCA/2844.}
\newtheorem{theorem}{Theorem}[section]
\newtheorem{lemma}[theorem]{Lemma}
\newtheorem{proposition}[theorem]{Proposition}
\theoremstyle{remark}
{%
    \newtheorem{example}[theorem]{Example}
}%
\newtheorem{remark}[theorem]{Remark}
\newcommand*{\ii}{\ensuremath{i}}
\newcommand*{\R}[1]{\ensuremath{\mathbb{R}^{#1}}}
\newcommand*{\C}[1]{\ensuremath{\mathbb{C}^{#1}}}
\DeclareRobustCommand*{\Z}[1]{\ensuremath{\mathbb{Z}^{#1}}}
\newcommand*{\Q}[1]{\ensuremath{\mathbb{Q}^{#1}}}
\newcommand*{\pr}[1]{\ensuremath{\left(#1\right)}}
\newcommand*{\br}[1]{\ensuremath{\left\{#1\right\}}}
\newcommand*{\wo}[1]{\ensuremath{-#1}}
\newcommand*{{\bull}}{{\scriptscriptstyle{\bullet}}}
\newcommand*{\of}[1]{\ensuremath{\!\pr{#1}}}
\newcommand*{\Sym}[2]{\ensuremath{\operatorname{Sym}^{#1}\of{#2}}}
\newcommand*{\GL}[1]{\ensuremath{\operatorname{GL}\of{#1}}}
\newcommand*{\SL}[1]{\ensuremath{\operatorname{SL}\of{#1}}}
\newcommand*{\PSL}[1]{\ensuremath{\operatorname{PSL}\of{#1}}}
\newcommand*{\Lm}[2]{\ensuremath{\Lambda^{#1}\pr{#2}}}
\newcommand*{\homology}[2]{\ensuremath{H_{#1}\of{#2}}}
\DeclareMathOperator{\Ad}{Ad}
\newcommand*{\im}{\ensuremath{\operatorname{im}}}
\newcommand*{\Proj}[1]{\ensuremath{\mathbb{P}^{#1}}}
\newcommand*{\homotopygroup}[2]{\ensuremath{\pi_{#1}\of{#2}}}
\newcommand*{\fundamentalgroup}[1]{\homotopygroup{1}{#1}}
\newcommand*{\pd}[3][1]%%
{%%
\frac{%
\partial%
\ifnum\pdf@strcmp{#1}{1}=0\else^#1\fi%
#2}%
{%
\partial
\ifnum\pdf@strcmp{#1}{1}=0\else^#1\fi%
#3
}%
}%%
\renewcommand*\env@matrix[1][\arraystretch]{%
  \edef\arraystretch{#1}%
  \hskip -\arraycolsep
  \let\@ifnextchar\new@ifnextchar
  \array{*\c@MaxMatrixCols c}}
\newcommand*{\rank}[1]{\ensuremath{\operatorname{rank} #1}}
\newcommand*{\id}{\ensuremath{\operatorname{id}}}
\newcommand*{\Bihol}[1]{\ensuremath{\operatorname{Bihol}\of{#1}}}
\newcommand*{\Hol}[2]{\ensuremath{\operatorname{Hol}\of{#1,#2}}}
\newcommand*{\OnGroup}{\pr{\GL{2,\C{}}/\Z{}_n} \rtimes \Sym{n}{\C{2}}^*}
\newcommand*{\SOnGroup}{\pr{\SL{2,\C{}}/\Z{}_n} \rtimes \Sym{n}{\C{2}}^*}
\newlength{\transposeHeight}
\newcommand*{\Lie}[1]{\ensuremath{\mathfrak{\lowercase{#1}}}}
\newcommand*{\MakeLie}[1]{\expandafter\def\csname Lie#1\endcsname{\Lie{#1}}}
\def\lst{G,H}
\lst\do{\expandafter\MakeLie \i}
\newcommand*{\LieSL}[1]{\ensuremath{\mathfrak{sl}\of{#1}}}
\newcommand*{\Aut}[1]{\ensuremath{\operatorname{Aut} #1}}
\newcommand*{\OO}[1]{%
  \ensuremath{%
    \mathcal{O}%
    \IfStrEq{#1}{0}{}{\of{#1}}
  }%
}%
\newcommand*{\OOp}[2]{
  \ensuremath{
    \mathcal{O}
    \IfStrEq{#1}{0}{}{\of{#1}}
    \IfStrEq{#2}{1}{}{^{\oplus{#1}}}
  }
}
\newcommand*{\AtoBtoCtoD}[7]
\newenvironment{tbl}[4][.]%
\ifnum\pdf@strcmp{#1}{.}=\z@%
\ifnum\pdf@strcmp{#4}{}=\z@%
\ifnum\pdf@strcmp{#4}{}=\z@%
\ifnum\pdf@strcmp{#1}{}=\z@\else\caption{#1} \\ \fi%
\ifnum\pdf@strcmp{#4}{}=\z@%
\newcolumntype{A}{>{$}l<{$}}
\newcolumntype{V}{>{$}r<{$}}
\newcommand*{\Aff}[1]{\ensuremath{\operatorname{Aff}_{\mathbb{C}}\of{#1}}}
\newcommand*{\uAff}[1]{\ensuremath{\widetilde{\operatorname{Aff}}_{\mathbb{C}}\of{#1}}}
\renewcommand{\arraystretch}{1.2}
\newcommand*{\betweenEntries}{\\}
\newcommand{\twobytwo}[4]{\ensuremath{\pr{\begin{smallmatrix}#1&#2\\#3&#4\end{smallmatrix}}}}
\newcommand{\twobytwob}[4]{\ensuremath{\left[\begin{smallmatrix}#1&#2\\#3&#4\end{smallmatrix}\right]}}
\newcommand*{\rG}[1]{\ensuremath{\mathscr{G}_{#1}}}
\begin{document}

\begin{abstract}
We classify the transitive, effective, holomorphic actions of connected complex Lie groups on complex surfaces.
\end{abstract}

\maketitle
\tableofcontents

\section{Introduction}
\label{section:HomogSurfaces}
This paper classifies faithful transitive holomorphic actions of connected complex Lie groups on complex surfaces; see table~\vref{table:complexHomogeneousSurfaces}.
Lie classified the germs near a generic point of holomorphic actions of complex Lie algebras on complex surfaces \cite{Lie:GA:5} p .767--773 (also see \cite{Mostow:1950}, \cite{Olver:1995} p. 472); there are 27 connected families of equivalence classes of actions. 
The intransitive actions are not considered in this paper, but they appear in Lie's classification.
Tits \cite{Tits:1962} classified the compact complex manifolds of dimensions 2 and 3 acted on holomorphically and transitively by complex Lie groups.
Erdman-Snow \cite{Erdman-Snow:1985, Snow:1979} classified the complex solv-manifolds (i.e. quotients of a complex solvable Lie group by a discrete subgroup) of complex dimension 1, 2 or 3. 
Huckleberry and Livorni \cite{Huckleberry:1986,Huckleberry/Livorni:1981} classified the complex surfaces which admit transitive holomorphic actions (also see \cite{Oeljeklaus/Richthofer:1984}).
Winkelmann \cite{Winkelmann:1986,Winkelmann:1988,Winkelmann:1988b,Winkelmann:1989} classified the complex 3-manifolds which admit transitive holomorphic actions.
Those papers classified the complex manifolds but not the actions.
There are also some disconnected complex Lie groups containing the groups listed in table~\vref{table:complexHomogeneousSurfaces}; we ignore these, but the interested reader can consult Huckleberry \cite{Huckleberry:1986} for some information.

\section{Quotients of Lie group actions}

A \emph{complex homogeneous space} is a pair \(\pr{X,G}\) of a connected complex manifold \(X\) with a connected complex Lie group \(G\) acting holomorphically, transitively and effectively on \(X\).
For each transitive Lie algebra action germ in any of Lie's families, there is (up to isomorphism) a unique complex homogeneous surface \(\pr{X,G}\) with \(X\) simply connected inducing this Lie algebra action germ.
In our table, this complex homogeneous surface \(\pr{X,G}\) is listed first, followed by all complex homogeneous surfaces \(\pr{X',G'}\), with \(X'\) not necessarily simply connected but covered by \(X\), with \(G'\) covered by \(G\), with the same Lie algebra action germ.

A \emph{morphism} of complex homogeneous spaces is a pair \(\pr{\delta,h}\) where \(\delta \colon X \to X'\) is a holomorphic map equivariant for a holomorphic group morphism \(h \colon G \to G'\).
The quotient surfaces \(\pr{X',G'}\) (for which \(\delta\) is a submersion) of a given complex homogeneous surface \(\pr{X,G}\) correspond to the discrete groups \(\pi\) acting on \(X\) holomorphically, freely and properly so that every element of \(\pi\) commutes with every element of \(G\): \(X'=X/\pi\), \(G'=G/\pr{\pi \cap G}\).
For example, if \(X=\C{2}\) and \(G=\C{2}\) acts by translation, then any element of \(\pi\) must commute with translations, so must be a translation.
Hence \(\pi\) can be precisely any discrete subgroup of the translation group and \(X'=\C{2}/\pi\).

If a diffeomorphism \(f\) commutes with all elements of \(G\), it preserves the fixed locus of every element of \(G\).
We start our search for candidates \(\pi\) by looking for the biholomorphisms \(f\) which commute with every element of \(G\) and have no fixed points and preserve the fixed point locus of any element of \(G\).
We then ask which discrete groups of such biholomorphisms act freely and properly.

\section{Lie's classification into families}\label{section:Lies.classification}

Lie doesn't specify whether he is classifying the real Lie algebra actions on real surfaces or the complex Lie algebra actions on complex surfaces; the classification turns out to be essentially the same.
Lie categorized the families of holomorphic Lie algebra actions on surfaces according to various series:%
\begin{center}\begin{tabular}{@{}Vl@{}}
A & no invariant holomorphic foliation \\
B & one invariant holomorphic foliation \\
C & 2 invariant holomorphic foliations \\
D & a one parameter family of invariant holomorphic foliations \\
E & an infinite dimensional family of invariant holomorphic foliations. 
\end{tabular}\end{center}
\noindent{}The \(B\) series splits into subseries, according to whether the quotient group on the space of leaves is%
\begin{center}\begin{tabular}{@{}Vl@{}}
B\alpha & not transitive \\
B\beta & 1-dimensional \\
B\gamma & 2-dimensional \\
B\delta & 3-dimensional.%
\end{tabular}\end{center}
\noindent{}In this paper we can ignore the \(B\alpha\) subseries, because the single action it contains is intransitive.
The \(C\) series splits into 9 subseries, with no obvious geometric interpretation, called \(C1, C2, \dots, C9\).
Both \(C1\) and \(C4\) are intransitive, so we ignore them.
The \(D\) series splits into 3 actions, \(D1\), \(D2\) and \(D3\), all of which are transitive. 
The \(E\) series has only one action, which is intransitive, and so we ignore it.

\section{Biholomorphism groups of some bundles}

Recall the biholomorphism groups of the complex homogeneous curves; \(E_a=\C{}/\Z{}[1,a]\) is an arbitrary elliptic curve and \(\omega=e^{\pi i/3}\) and \(\tau\) is not in \(\Z{}[1,i] \cup \Z{}[1,\omega]\).
\begin{center}
\begin{tabular}{AA}
\toprule X & G \\ \midrule
\Proj{1} & \PSL{2,\C{}} \\
\C{} & \Aff{\C{}} = \C{\times} \rtimes \C{} \\
%\C{} & \C{} \\
\C{\times} & \C{\times} \sqcup \C{\times} \\ 
E_i & \Set{\pm 1, \pm i} \rtimes E_i  \\
E_{\omega} & \Set{\pm 1, \pm \omega, \pm \omega^2} \rtimes E_{\omega} \\
E_{\tau} & \Set{\pm 1} \rtimes E_{\tau} \\
\bottomrule
\end{tabular}
\end{center}

\begin{example}\label{example:nontrivial.bundle}
Take \(\Lambda \subset \C{}\) a lattice and \(c \in \C{\times}\) a number so that \(c \Lambda = \Lambda\).
Let \(\pi\) be the group of biholomorphisms of \(\C{2}\) generated by 
\(\pr{z,w} \mapsto \pr{z+1,cw}\) and \(\pr{z,w}\mapsto\pr{z,w+\lambda}\) for any \(\lambda \in \Lambda\). 
Let \(S_c=\C{2}/\pi\) and consider the holomorphic bundle \(\C{}/\Lambda \to S_c \to \C{\times}\) given quotienting the maps \(w \mapsto \pr{z_0,w}\) and \(\pr{z,w} \mapsto z\) by \(\pi\)-action.
\end{example}

\begin{proposition}
The biholomorphism group of \(S_c\) is the quotient \(G/\pi\) with \(G\) the group of the following biholomorphisms, where \(b \in \C{\times}\), \(b\Lambda=\Lambda\), \(\lambda_0 \in \Lambda\) and \(f \colon \C{\times} \to \C{}/\Lambda\) is any holomorphic function.
\begin{center}\begin{tabular}{VA}
%\begin{tbl}{2}{VA}{c & \pr{z,w}\mapsto}
\toprule
c & \pr{z,w}\mapsto \\
\midrule
 1 & \pr{\pm z + z_0,bw+f\of{e^{2 \pi i z}}} \\
-1 & \pr{\pm z + z_0,bw+\frac{\lambda_0}{2} + e^{\pi i z} f\of{e^{2 \pi i z}}} \\
e^{2 \pi i p/q}\ne \pm 1 & \pr{z + z_0,bw+\frac{\lambda_0}{1-c} + e^{2\pi i pz/q} f\of{e^{2 \pi i z}}} \\
\bottomrule
\end{tabular}\end{center}
\end{proposition}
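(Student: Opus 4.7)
The strategy is to lift any biholomorphism of $S_c$ to its universal cover $\C{2}$ and use the elliptic fibration to pin down the form of the lift. The reverse inclusion---that each $\Phi$ of the stated form normalizes $\pi$ and hence descends to a biholomorphism of $S_c$---is a routine direct computation; what needs work is the other direction, so that $\Bihol{S_c} = G/\pi$.

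Step one: every biholomorphism $F$ of $S_c$ preserves the bundle projection $p \colon S_c \to \C{\times}$. The fiber $p^{-1}(t) = \C{}/\Lambda$ is a compact complex manifold, so every holomorphic map from it into the Stein manifold $\C{\times}$ is constant; hence $p \circ F$ is constant on each fiber, so $F$ carries fibers into fibers. It therefore induces a biholomorphism of the base $\C{\times}$, of the form $t \mapsto \alpha t^{\pm 1}$. Lifting $F$ to $\Phi \colon \C{2} \to \C{2}$ normalizing $\pi$, the first coordinate is forced to be $\phi(z) = \epsilon z + z_0$ with $\epsilon = \pm 1$. The second coordinate restricts, for each $z$, to a biholomorphism of the fiber $\C{}/\Lambda$, which lifts to an affine map $w \mapsto b(z) w + g(z)$ with $b(z)\Lambda = \Lambda$. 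Because $\{b \in \C{\times} : b\Lambda = \Lambda\}$ is a finite subgroup of the unit circle (the automorphism group of the elliptic curve), holomorphy of $b(z)$ forces it to be a constant $b$. Thus $\Phi(z,w) = (\epsilon z + z_0, bw + g(z))$.

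Step two: impose $\Phi \pi \Phi^{-1} = \pi$ on the generators $T_1(z,w) = (z+1, cw)$ and $T_\lambda(z,w) = (z, w+\lambda)$. Conjugating $T_\lambda$ yields $b\lambda \in \Lambda$ for every $\lambda \in \Lambda$, hence $b\Lambda = \Lambda$. Conjugating $T_1$ yields $c = c^\epsilon$ (so $\epsilon = 1$ or $c^2 = 1$) together with the functional equation $g(z+1) - cg(z) = \lambda_0$ for some constant $\lambda_0 \in \Lambda$.

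Step three: solve the functional equation by cases. For $c = 1$, the equation becomes $g(z+1) - g(z) \in \Lambda$, equivalent to $g$ descending to a holomorphic $f \colon \C{\times} \to \C{}/\Lambda$, so $g(z) = f(e^{2\pi i z})$. For $c = -1$, the constant $\lambda_0/2$ is a particular solution; the homogeneous remainder $h$ satisfies $h(z+1) = -h(z)$, hence $h(z) = e^{\pi i z} k(z)$ with $k$ of period $1$, giving $h(z) = e^{\pi i z} f(e^{2\pi i z})$. For $c = e^{2\pi i p/q} \ne \pm 1$, the constant $\lambda_0/(1-c)$ is a particular solution; the homogeneous equation $h(z+1) = c h(z)$ is solved by $h(z) = e^{2\pi i p z/q} k(z)$ with $k$ of period $1$, giving the stated form. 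I expect the main subtleties to be step one---justifying that every biholomorphism preserves the fibration---together with careful bookkeeping of the $\pi$-ambiguity of the lift, which is responsible for the $\C{}/\Lambda$-valued range of $f$.
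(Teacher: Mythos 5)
Your proposal is correct and follows essentially the same route as the paper: lift to the universal cover $\C{2}$, use compactness of the fibers to see that the fibration and its fiberwise affine structure are preserved (forcing $Z=\pm z+z_0$ and $W=bw+g(z)$ with $b\Lambda=\Lambda$), extract the functional equation $g(z+1)-cg(z)=\lambda_0$ from conjugation of the generators of $\pi$, and solve it case by case exactly as in the stated table. The only (cosmetic) difference is that you treat $c=1$ uniformly through the functional equation, whereas the paper disposes of that case separately by first composing with $(z,w)\mapsto(-z,w)$.
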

\begin{proof}
Let \(S=S_c\). 
The fundamental group \(\pi=\fundamentalgroup{S}\) is \(\pi=\Z{} \rtimes \Lambda\).
The fibration \(\pr{z,w}\mapsto z\) descends to a fibration \(S \to \C{\times}\).
The holomorphic functions on \(S\) are precisely the pullbacks via this fibration, because the fibers are compact.
Identify the base of the fibration with the ideals in the algebra of holomorphic functions on \(S\): the fibration is equivariant under biholomorphisms of \(S\).
Pick a biholomorphism of \(S\). 
Composing with the obvious biholomorphism \(\pr{z,w}\mapsto\pr{z+z_0,w}\), arrange that our biholomorphism takes \(z=0\) to \(z=0\).

Suppose that \(c=1\), i.e. the bundle is trivial.
Composing with \(\pr{z,w}\mapsto\pr{-z,w}\), arrange that our biholomorphism is the identity on the base of the fibration, so a 1-parameter family of automorphisms of an elliptic curve, parameterized by maps \(\C{\times} \to \Aut{\C{}/\Lambda}\).

Assume that \(c \ne 1\).
The biholomorphism lifts to a map 
\[
\pr{z,w}\mapsto\pr{Z(z,w),W(z,w)}
\] 
equivariant for an automorphism \(h \colon \pi \to \pi\):
\begin{align*}
Z(z+k, c^k w + \lambda) &= Z(z,w)+k'(k,\lambda), \\
W(z+k, c^k w + \lambda) &= c^{k'(k,\lambda)} W(z,w)+\lambda'(k,\lambda).
\end{align*}
The fibration is preserved so \(Z=Z(z)\) lifts a biholomorphism of \(\C{\times}\): \(Z(z)=\pm z+z_0\) for some \(z_0 \in \C{}\), so \(h(k,\lambda)=\pr{\pm k,\lambda'(k,\lambda)}\). 
The derivatives \(W_z\) and \(W_w\) are invariant under the action of \(\Lambda \subset \pi\), so (for any fixed \(z\)) are functions on the elliptic curve \(\C{}/\Lambda\), so independent of \(w\): \(W(z,w)=a(z)+b w\) for some constant \(b\) with \(b\Lambda=\Lambda\) and some holomorphic function \(a(z)\) so that  \(a(z+k)-c^{\pm k}a(z)+bw\pr{c^k-c^{\pm k}} \in \Lambda\) is constant.

If \(c=-1\) then equivariance says that \(a(z+1)+a(z) \in \Lambda\) is constant, say equal to \(\lambda_0\).
Let \(h(z) = a(z)-\frac{\lambda_0}{2}\), and then \(h(z+1)+h(z)=0\), so \(h(z)\) is a function of \(e^{\pi i z}\), say \(h(z)=g\of{e^{\pi iz}}\), for some \(g(s)\) holomorphic on \(\C{\times}\) and then \(g(s)+g(-s)=0\), i.e. \(g(s)=sf\pr{s^2}\) where \(f\) is holomorphic on \(\C{\times}\).

If \(c \ne 1\) and \(c \ne -1\) then we have \(c^k=c^{\pm k}\) so that \(\pm = +\) and \(Z(z,w)=z+z_0\) and write \(c=e^{2\pi ip/q}\) for some integers \(p,q\) with \(q \ne 0\). Then \(a(z)=\lambda_0/(1-c)+e^{2\pi ipz/q} f\of{e^{2\pi i z}}\) for an arbitrary holomorphic function \(f\) on the base.
\end{proof}

\section{%%
Quotient-free actions
}%%
\label{section:QFA}
The following elementary examples of homogeneous spaces admit no quotients.
\begin{center}%
\begin{tabular}{@{}VAAl@{}}
\toprule
& X & G & Description \\
\midrule
A1 & \Proj{2} & \PSL{3,\C{}} & projective plane \\
A2 & \C{2} & \GL{2,\C{}} \rtimes \C{2} & complex affine plane \\
A3 & \C{2} & \SL{2,\C{}} \rtimes \C{2} & complex special affine plane \\
C3 & \C{2} & \Aff{\C{}} \times \Aff{\C{}} & product of affine lines \\
C6 & \Proj{1} \times \C{} & \PSL{2,\C{}} \times \Aff{\C{}} & projective times affine line \\
C7 & \Proj{1} \times \Proj{1} & \PSL{2,\C{}} \times \PSL{2,\C{}} & product of projective lines \\
D3 & \C{2} & \C{\times} \rtimes \C{2} & rescaling and translation  plane \\
\bottomrule
\end{tabular}
\end{center}
For actions \(A2\), \(A3\), \(C3\) and \(D3\) the group \(G\) contains all translations of the plane \(X=\C{2}\). 
Any biholomorphism commuting with translations is a translation.
Each of these groups contains a rescaling, which does not commute with any nontrivial translation.
Therefore these actions have no quotients.

For each of the actions \(A1\), \(A2\), \(A3\), \(C3\), \(C6\), \(C7\) and \(D3\), each point of the surface \(X\) is the unique fixed point of some element \(g \in G\).
A biholomorphism of \(X\) commuting with every element of \(G\) fixes every element of \(X\): these actions have no quotients.

\section{\texorpdfstring%
{$C2$: translation times affine line}%
{C2: translation times affine line}%
}\label{section:C2}

Take \(X=\C{} \times \C{}\) and \(G=\C{} \times \Aff{\C{}}\) the product of the translation and affine groups in the product action.
The quotients of this action are precisely those of the form \(X'=\pr{\C{}/\Delta} \times \C{}\) and \(G'=\pr{\C{}/\Delta} \times \Aff{\C{}}\) for any discrete subgroup \(\Delta \subset \C{}\).

\section{\texorpdfstring%
{$C5$: projective times translation line}%
{C5: projective times translation line}%
}%

Take \(X=\Proj{1} \times \C{}\), \(G=\PSL{2,\C{}} \times \C{}\) with the product action. 
A biholomorphism \(\pr{z,w} \mapsto \pr{Z,W}\) commuting with all elements of \(G\) commutes with translations of \(w\), so \(Z(z,w)=Z(z,0)\) and \(W(z,w)=W(z,0)+w\).
The map \(z \mapsto W(z,0)\) is a holomorphic function on \(\Proj{1}\) so constant: our biholomorphism belongs to \(G\).
For any point \(z_0 \in \Proj{1}\), there is an element \(g \in G\) whose locus of fixed points is precisely \(\br{z_0} \times \C{}\).
Our biholomorphism, to commute with \(g\), preserves this locus, so \(Z\of{z_0,0}=z_0\) for all \(z_0 \in \Proj{1}\): there is a constant \(c\) so that \(\pr{Z(z,w),W(z,w)}=\pr{z,w+c}\).
The quotients of this group action are \(X'=\Proj{1} \times \pr{\C{}/\Delta}\), and \(G'=\PSL{2,\C{}} \times \pr{\C{}/\Delta}\), \(\Delta \subset \C{}\) an arbitrary discrete subgroup.

\section{%
\texorpdfstring%
{$D1$: the translation plane}%
{D1: the translation plane}%
}%
\label{section:D1}
Take \(X=G=\C{2}\).
As for A2, the quotients of this action are precisely \(X'=G'=X/\pi\), for any discrete subgroup \(\pi \subset \C{2}\).
The automorphism group of \(G\) is \(\Aut{G}=\GL{2,\C{}}\).
If \(\pi\) has rank 1 (i.e. is generated by a single element), arrange \(\pi=\left<\pr{1,0}\right>\) so \(X'=\C{\times} \times \C{}\).
If \(\pi\) has rank 2 and lies in a proper complex subspace of \(\C{2}\), arrange that subspace to be the complex span of \((1,0)\) so \(X'=\pr{\C{}/\pi} \times \C{}\) where \(\pr{\C{}/\pi}\) can be any elliptic curve, and by automorphism arrange  \(\pi=\left<\pr{1,0}, \pr{\tau,0}\right>\) for some \(\tau\) with positive imaginary part, uniquely determined up to \(\SL{2,\Z{}}\)-action.
If \(\pi\) has rank 2, and does not lie in a proper complex subspace of \(\C{2}\), arrange that  \(\pi=\left<\pr{1,0},\pr{0,1}\right>\) so \(X'=\C{\times} \times \C{\times}\).

If \(\pi\) has rank 3, then \(\pi\) has complex span all of \(X=\C{2}\) and the real span \(\R{} \pi\) of \(\pi\) contains a unique 1-dimensional complex subspace \(G_0=\R{}\pi \cap \ii \R{} \pi\).
Let \(\pi_0=\pi \cap G_0\) and let \(X_0\) be the orbit of \(G_0\) through the origin; the trivial fibration \(X_0 \to X \to \bar{X}\) quotients to a fibration \(X'_0 \to X' \to \bar{X}'\), \(X'_0=X_0/G_0\) where \(\bar{X}'\) is the space of orbits of \(G_0\) acting on \(X'\).
Since the action is transitive, \(\bar\pi=\pi/\pi_0\) is a discrete subgroup of \(\C{}\) so has rank 0, 1 or 2, and correspondingly \(\pi_0\) has rank 3, 2 or 1.

If \(\bar\pi\) has rank 2, then \(\pi=\left<\pr{1,0},\pr{\tau,\sigma},\pr{0,1}\right>\) after perhaps applying an automorphism.
If \(\bar\pi\) has rank 1, then \(\pi=\left<\pr{1,0},\pr{\tau,0},\pr{0,1}\right>\) after perhaps applying an automorphism.
If \(\bar\pi\) has rank 0 then \(\pi_0\) is a rank 3 discrete lattice in \(\C{}\), a contradiction.
The quotients of \(\pr{X,G}=\pr{\C{2},\C{2}}\) are as follows; take \(\tau\) any complex number in the upper half plane, \(\sigma \in \C{\times}\) and \(\Lambda=\Z{}[1,\tau]\) and let \(\pi \subset \C{2}\) be the group generated by the listed generators.
\begin{center}
\begin{tabular}{VA}
\toprule
\pi & X' \\
\midrule
\pr{1,0} & \C{\times} \times \C{} \\
\pr{1,0}, \pr{0,1} & \C{\times} \times \C{\times} \\
\pr{1,0}, \pr{\tau,0} & \pr{\C{}/\Lambda} \times \C{} \\
\pr{1,0}, \pr{\tau,0}, \pr{0,1} & \C{\times} \times \pr{\C{}/\Lambda} \\
\pr{1,0},\pr{\tau,\sigma},\pr{0,1} & \C{\times} \to X' \to \C{}/\Lambda \\
\bottomrule
\end{tabular}
\end{center}

The delicate case of \(\pi\) generated by \(\pr{1,0},\pr{\tau,\sigma},\pr{0,1}\) clearly reduces to the previous case when \(\sigma=0\), so as we vary \(\sigma\) we find a family of \(\C{\times}\)-bundles over an elliptic curve, and this family contains a trivial bundle at \(\sigma=0\): each bundle \(X_0' \to X' \to \bar{X}\) in the family is topologically trivial.
As Huckleberry and Livorni \cite{Huckleberry/Livorni:1981} p. 1100 point out, a holomorphic nonconstant function on \(X'\) pulls back to a holomorphic nonconstant function on \(X=\C{2}\) invariant under \(\pi\), which one checks forces \(\sigma=0\).
Hence \(X'_0 \to X' \to \bar{X}'\) is a holomorphically trivial bundle just when \(\sigma=0\).
Huckleberry and Livorni \cite{Huckleberry/Livorni:1981} p. 1100 also prove that every topologically trivial \(\C{\times}\)-bundle over any elliptic curve arises as above as \(X'=\C{2}/\pi\).
It is remarkable that \(G'=X'\) is an abelian complex Lie group.

If \(\pi\) has rank 4, then \(G'=X'\) is a complex torus \(X'=\C{n}/\Lambda\), \(\Lambda=\pi\).
As usual we can assume that
\[
\Lambda=\Z{}\left[\lambda_1,\lambda_2,\lambda_3,\lambda_4\right],
\]
with
\[
\lambda_i = e_i + \tau_{1i} e_1 + \dots + \tau_{4i} e_4
\]
so that \(\det \operatorname{Im} \tau > 0\).

If \(\pi\) has rank greater than 4, then some infinite covering space of \(X'\) is a complex torus, compact, a contradiction.

\section{\texorpdfstring%
{$C8$: affine plane with 1-dimensional stabilizer}%
{C8: affine plane with 1-dimensional stabilizer}%
}\label{section:C8}

Looking for suitable Lie algebras, we see that the connected 1-dimensional complex subgroups \(H \subset \C{\times} \times \C{\times}\) are parameterized by taking any \(p \in \C{2}\wo{0}\) and mapping
\[
\lambda \in \C{} \mapsto \pr{e^{p_1 \lambda},e^{p_2 \lambda}} \in H.
\]
Rescaling \(p\) doesn't change \(H\), so denote \(H\) by \(H_p\) for \(p \in \Proj{1}\).
Embed
\[
\pr{\lambda_1,\lambda_2} \in H_p 
\mapsto 
\begin{pmatrix}
\lambda_1 & 0 \\
0 & \lambda_2
\end{pmatrix}
\in
\GL{2,\C{}}.
\]
Define a group of affine transformations \(G_p = H_p \rtimes \C{2} \subset \Aff{\C{2}}\).
If \(p=0\) or \(p=\infty\), the action of \(G_p\) on \(\C{2}\) is \(C2\); see 
section~\vref{section:C2}.
If \(p=1\), the action of \(G_1\) on \(\C{2}\) is \(D3\); see 
section~\vref{section:QFA}.
For \(p\ne 0, 1, \infty\), Lie denoted the action of \(G_p\) on \(\C{2}\) as \(C8\).
As for A2, \(C8\) has no quotients.

\section{\texorpdfstring%
{$C9$: the affine quadric surface}%
{C9: the affine quadric surface}%
}%
\label{section:C9}

\subsection{Definition}

The surface \(X=\Proj{1} \times \Proj{1} \setminus \operatorname{diagonal}\) is the set of ordered pairs of distinct points of the projective line; \(G=\PSL{2,\C{}}\) acts on the projective line and so on its pairs of points.
Identify \(X\) with a smooth affine quadric surface in \(\pr{b^2-4ac=1} \subset \C{3}\) by
\[
\pr{\alpha, \beta} \in \Proj{1} \times \Proj{1} \setminus \operatorname{diagonal}
\mapsto
\pr{x,y,z}=
\pr{\frac{1}{\alpha-\beta},\frac{\alpha+\beta}{\alpha-\beta},\frac{\alpha\beta}{\alpha-\beta},} \in \C{3}.
\]
View the quadric \(X\) as the set of traceless \(2 \times 2\) matrices \(A \in \LieSL{2,\C{}}\) so that \(\det A=1\), an adjoint orbit of \(G\).
The surface \(X\) is a bundle of affine lines over a projective line,  \(\C{} \to X \to \Proj{1}\), by the fibration taking \((\alpha,\beta) \mapsto \alpha\), with fiber parameterized by \(\frac{\alpha \beta}{\alpha-\beta}\).

\subsection{Quotient spaces}

Given any point of \(X\), say \(\pr{\alpha,\beta}\) with \(\alpha \ne \beta \in \Proj{1}\), the quadratic polynomial \(a \, z^2 + b \, z + c=\pr{\alpha-z}\pr{\beta-z}\)
has roots at \(\alpha\) \and \(\beta\). 
This quadratic polynomial is uniquely determined up to scaling, and satisfies \(b^2-4 \, ac \ne 0\), since it has two distinct roots. 
Map
\[
\pr{\alpha,\beta} \in X \mapsto
\begin{bmatrix}
a \\
b \\
c
\end{bmatrix}
\in X'=\Proj{2} \setminus \pr{b^2=4 \, ac},
\]
so \(X \to X'\) is a 2-1 covering space, and \(X'\) is also a homogeneous space for the same Lie group: \(G'=G\).

Given any two distinct points \(\alpha,\beta\), there is a 1-parameter subgroup of \(G\) fixing precisely \(\alpha\) and \(\beta\) in \(\Proj{1}\), and therefore  acting on \(X\) fixing or interchanging the two points \(\pr{\alpha,\beta}, \pr{\beta,\alpha}\in X\).
Any biholomorphism of \(X\) commuting with all elements of \(G\) leaves invariant the fixed locus of each element of \(G\), so leaves every point of \(X\) fixed or maps \(\pr{\alpha,\beta} \to \pr{\beta,\alpha}\).
The biholomorphisms of \(X\) commuting with \(G\) are \(\id, \pr{\alpha,\beta}\mapsto\pr{\beta,\alpha}\), so \(X'\) is the only quotient.

The biholomorphism group of the affine quadric is an infinite dimensional Banach--Lie 
group \cite{Huckleberry/Isaev:2009}.
The group of regular algebraic morphisms of the affine quadric is not known \cite{Totaro:2007}.
By our global classification of Lie group actions in this paper, neither group contains any finite dimensional transitive complex Lie subgroup other than \(\PSL{2,\C{}}\) and its conjugates.

\section{%
\texorpdfstring%
{$D2$: the affine group}%
{D2: the affine group}%
}%
\label{section:D2}

\subsection{Definition}

Up to biholomorphic isomorphism, there is a unique 2-dimensional connected and simply connected nonabelian complex Lie group; call it \(G=\uAff{\C{}}\), since it is the universal covering group of the group of complex affine transformations of \(\C{}\).
Write elements of \(\uAff{\C{}}\) as pairs \((a,b)\):
\[
\pr{a,b}\pr{a',b'}=\pr{a+a',b+e^a b'}
\]
associating to each pair an affine transformation \(z \mapsto e^a z + b\).
Represent \(\uAff{\C{}}\) as matrices:
\[
\begin{pmatrix}
e^a & 0 & b \\
0 & 1 & a \\
0 & 0 & 1
\end{pmatrix}
\]
with the exact sequence \(1 \to 2 \pi i \Z{} \times \left\{0\right\} \to \uAff{\C{}} \to \Aff{\C{}} \to 1\).
The center of \(\uAff{\C{}}\) is precisely \(2 \pi i \Z{} \times \left\{0\right\}\).
Let \(X=\uAff{\C{}}\) and let \(G=\uAff{\C{}}\) act on \(X\) by left action.

\subsection{Quotient spaces}

\begin{lemma}
The biholomorphic automorphisms of \(G\) are precisely the maps
\[
(a,b) \mapsto \pr{a, \gamma \pr{1-e^a} + \beta b}
\]
for \(\beta \in \C{\times}, \gamma \in \C{}\).
In particular, all biholomorphic automorphisms of \(G\) are inner.
\end{lemma}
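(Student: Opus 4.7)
The plan is to reduce the classification of biholomorphic group automorphisms of \(G=\uAff{\C{}}\) to that of Lie algebra automorphisms of \(\Lie g=\operatorname{Lie}(G)\). Because \(G\) is simply connected by construction, every Lie algebra automorphism lifts uniquely to a biholomorphic group automorphism of \(G\) (the lift is a covering from simply connected to simply connected, hence a biholomorphism), so the problem reduces to (i) classifying \(\operatorname{Aut}(\Lie g)\), and (ii) exhibiting the stated formula as one automorphism realizing each element of \(\operatorname{Aut}(\Lie g)\).

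For (i), read from the matrix model that a basis \(A,B\) of \(\Lie g\) satisfies the affine relation \([A,B]=B\). The derived subalgebra \([\Lie g,\Lie g]=\C{}B\) is a characteristic line, so any \(\phi\in\operatorname{Aut}(\Lie g)\) must send \(B\mapsto\beta B\) for some \(\beta\in\C{\times}\), and \(A\mapsto\alpha A+\gamma B\) for some \(\alpha\in\C{\times},\gamma\in\C{}\). The bracket relation \([\phi A,\phi B]=\phi[A,B]=\beta B\) forces \(\alpha\beta=\beta\), hence \(\alpha=1\); thus \(\operatorname{Aut}(\Lie g)\) is the 2-parameter family \((\beta,\gamma)\in\C{\times}\times\C{}\).

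For (ii), directly verify that
\[
\phi_{\beta,\gamma}(a,b):=\pr{a,\gamma(1-e^a)+\beta b}
\]
is a group homomorphism for the product \((a_1,b_1)(a_2,b_2)=(a_1+a_2,b_1+e^{a_1}b_2)\); the identity \(1-e^{a_1+a_2}=(1-e^{a_1})+e^{a_1}(1-e^{a_2})\) makes the check a one-line cancellation. Differentiating \(\phi_{\beta,\gamma}\) at the identity shows it induces the Lie algebra map \(B\mapsto\beta B,\ A\mapsto A-\gamma B\), and as \((\beta,\gamma)\) ranges over \(\C{\times}\times\C{}\) this exhausts \(\operatorname{Aut}(\Lie g)\); by uniqueness of the simply connected lift, every biholomorphic automorphism of \(G\) has the stated form.

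For the final assertion that all automorphisms are inner, compute directly that conjugation by \((c,d)\) in \(G\) is \((c,d)(a,b)(c,d)^{-1}=(a,d(1-e^a)+e^c b)\), which is exactly \(\phi_{e^c,d}\); since \(\exp\colon\C{}\to\C{\times}\) is surjective, every pair \((\beta,\gamma)\) arises this way. I anticipate no serious obstacle; the only care needed is bookkeeping in the group-law identity for \(\phi_{\beta,\gamma}\) and tracking the sign convention in the induced Lie algebra automorphism.
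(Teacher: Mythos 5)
Your proof is correct, but it takes a genuinely different route from the paper. The paper works entirely at the group level: it writes an arbitrary holomorphic group morphism as \(\pr{A(a,b),B(a,b)}\), extracts the functional equations from the multiplication law, and solves them step by step (showing first \(A(a,b)=\alpha a\), then \(\alpha=1\), then integrating a difference equation to get \(B(a)=\gamma\pr{1-e^a}\)). You instead invoke the Lie correspondence: since \(G=\uAff{\C{}}\) is simply connected, holomorphic automorphisms of \(G\) biject with \(\C{}\)-linear automorphisms of \(\mathfrak{g}\); the relation \([A,B]=B\) and the fact that the derived subalgebra \(\C{}B\) is characteristic pin down \(\operatorname{Aut}(\mathfrak{g})\) as the two-parameter family \(A\mapsto A+\gamma B\), \(B\mapsto\beta B\); and a direct check (using \(1-e^{a+a'}=(1-e^a)+e^a(1-e^{a'})\)) shows the stated maps realize all of them. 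Your computation of conjugation by \((c,d)\), giving \(\phi_{e^c,d}\) with \(e^c\) surjecting onto \(\C{\times}\), correctly establishes the innerness claim, which the paper asserts without spelling out. The trade-off: your argument is shorter and more conceptual but leans on the (standard) fact that \(\C{}\)-linear Lie algebra automorphisms integrate to holomorphic group automorphisms of a simply connected complex Lie group and that a group automorphism of a connected group is determined by its differential at the identity; the paper's functional-equation computation is longer but entirely self-contained. Both are complete proofs.
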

\begin{proof}
Suppose that \(\pr{A,B}=\pr{A(a,b),B(a,b)}\) is a holomorphic group morphism, so
\begin{align*}
A\of{a+a',b+e^ab'}&=A\of{a,b}+A\of{a',b'}, \\
B\of{a+a',b+e^ab'}&=B\of{a,b}+e^{A\of{a,b}} B\of{a',b'},
\end{align*}
for all \(a,b,a',b' \in \C{}\).
In particular, \(A\of{a+a',0}=A\of{a,0}+A\of{a',0}\), so that \(A\of{a,0}=\alpha a\) for some constant \(\alpha \in \C{}\).
Similarly, \(A\of{0,b+b'}=A\of{0,b}+A\of{0,b'}\) for all \(b, b' \in \C{}\) so \(A\of{0,b}=\mu b\) for some constant \(\mu\).
But then
\begin{align*}
A\of{a,e^a b'} 
&= 
A\of{a,0}+A\of{0,b'},\\
&=
\alpha a + \mu b',
\\
&=
A\of{0,b'}+A\of{a,0}, \\
&=
A\of{a,b'},
\\
&=
A\of{a,0}+A\of{0,e^{-a}b'},
\\
&=
\alpha a + \mu e^{-a} b',
\end{align*}
for any \(a, b'\), so that \(\mu=0\) and \(A\of{a,b}=\alpha a\) for every \(a,b \in \C{}\).
So \(B\of{0,b+b'}=B\of{0,b}+B\of{0,b'}\) and \(B\of{0,b}=\beta b\) for some \(\beta \in \C{}\).
Taking the rule
\[
B\of{a+a',b+e^ab'}=B\of{a,b}+e^{A\of{a,b}} B\of{a',b'},
\]
and first taking \(b=a'=0\),
\[
B\of{a,e^a b'}=B\of{a,0}+e^{A\of{a,0}}B\of{0,b'},
\]
and then letting \(b=e^a b'\), we find
\begin{align*}
B\of{a,b}&=B\of{a,0}+e^{\alpha a}B\of{0,e^{-a} b},
\\
&=B\of{a,0}+\beta e^{\pr{\alpha-1}a} b.
\end{align*}
On the other hand, taking \(a=b'=0\),
\[
B\of{a',b}=B\of{0,b}+B\of{a',0},
\]
so that
\begin{align*}
B\of{a,b}&=B\of{0,b}+B\of{a,0},
\\
&=B\of{a,0}+\beta b.
\end{align*}
So therefore \(\beta=0\) or \(\alpha=1\).
If \(\beta=0\), then \(\pr{A,B}\) is not a biholomorphism.
So \(\alpha=1\) and \(B\of{a,b}=B\of{a,0}+\beta b\), and the equation 
\[
B\of{a+a',b+e^ab'}=B\of{a,b}+e^{A\of{a,b}} B\of{a',b'}
\]
says that (if we let \(B(a)=B\of{a,0}\)):
\[
B\of{a+a'}=B\of{a}+e^a B\of{a'}=B\of{a'}+e^{a'} B\of{a}.
\]
Let \(a'=h\) and 
\[
\frac{B\of{a+h}-B\of{a}}{h} = e^a \frac{B\of{h}}{h},
\]
so that \(B'(a)=e^a B'(0)\), and we integrate to find \(B(a)=\gamma \pr{1-e^a}\) for some constant \(\gamma\):
\[
A(a,b)=a, B(a,b)=\gamma \pr{1-e^a}+ \beta b,
\]
which is an isomorphism.
\end{proof}

\begin{lemma}
Suppose that \(G\) is a connected nonabelian complex Lie group of 2 complex dimensions and \(X=G/\pi\) where \(\pi \subset G\) is any discrete central subgroup.
No compact complex surface admits any holomorphic \(\pr{X,G}\)-structure.
As a special case, \(G\) contains no cocompact lattice. 
In particular, the complex affine group \(\Aff{\C{}}\) contains no cocompact lattice, nor does its universal covering group \(\uAff{\C{}}\).
\end{lemma}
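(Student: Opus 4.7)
The plan is to exploit the non-unimodularity of $G$ by producing globally defined holomorphic 1-forms on any candidate compact $\pr{X,G}$-manifold $M$ and then contradicting Stokes' theorem.

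First I would identify $G$ with $\uAff{\C{}}$ modulo a discrete central subgroup, using the classification of 2-dimensional connected nonabelian complex Lie groups, so that $X=G/\pi$ is itself $\uAff{\C{}}/\pi_1$ for some $\pi_1 \subset 2\pi i \Z{} \times \br{0}$. In the coordinates $(a,b)$ on $\uAff{\C{}}$, the 1-forms $\alpha_0 = da$ and $\beta_0 = e^{-a}\,db$ are left-invariant Maurer--Cartan forms: left translation by $(a_0,b_0)$ sends $db$ to $e^{a_0}\,db$, which cancels the shift $e^{-a} \mapsto e^{-a_0}e^{-a}$. They are also preserved by the central translations $(a,b)\mapsto(a+2\pi i k,b)$, so they descend to $G$-invariant holomorphic 1-forms $\alpha,\beta$ on $X$ obeying $d\alpha=0$ and $d\beta=-\alpha\wedge\beta$, with $\alpha\wedge\beta$ nowhere zero.

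Now assume for contradiction that $M$ is a compact complex surface with an $\pr{X,G}$-structure. By $G$-invariance, $\alpha$ and $\beta$ pull back consistently through the chart transitions to globally defined holomorphic 1-forms on $M$ (still called $\alpha,\beta$), satisfying the same Maurer--Cartan relations, so $\alpha\wedge\beta$ is a nowhere-vanishing holomorphic $(2,0)$-form on $M$. The decisive observation is that $\alpha\wedge\beta=-d\beta$ is already exact, and therefore
\[
\pr{\alpha\wedge\beta}\wedge\overline{\pr{\alpha\wedge\beta}} \;=\; d\beta \wedge d\bar\beta \;=\; d\bigl(\beta\wedge d\bar\beta\bigr).
\]
Integrating over $M$, the right-hand side vanishes by Stokes' theorem, while the left-hand side is pointwise a strictly positive multiple of the volume form of $M$. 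This contradiction establishes the main claim. The cocompact lattice statement then follows by specializing to $\pi=\br{e}$: a cocompact lattice $\Gamma\subset G$ would make $\Gamma\backslash G$ a compact complex surface with a canonical $\pr{G,G}$-structure, ruled out by what we have just proved. This applies in particular to $\Aff{\C{}}$ and its universal cover $\uAff{\C{}}$.

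The step I expect to need the most care is verifying that the factor $e^{-a}$ really is single-valued on every admissible quotient $X$, so that $\beta$ genuinely descends and pulls back to a global form on $M$; once this is secured, the elegant Stokes identity above is simply the holomorphic manifestation of the non-unimodularity of $G$, whose modular character is $e^a$ with logarithmic differential $\alpha$.
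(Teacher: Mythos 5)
Your proof is correct and is essentially the paper's own argument: both produce $G$-invariant holomorphic $1$-forms $\alpha,\beta$ on the model with $d\alpha=0$ and $d\beta=\pm\alpha\wedge\beta$, transport them to $M$ via the $\pr{X,G}$-structure, and derive the contradiction $0<\int_M\alpha\wedge\beta\wedge\bar\alpha\wedge\bar\beta=\int_M d\pr{\beta\wedge d\bar\beta}=0$ from Stokes. The only difference is cosmetic (the paper phrases the forms as components of the Cartan connection coming from the invariant Maurer--Cartan form, while you write them explicitly in coordinates and check descent through the center), and your sign convention for $d\beta$ is immaterial since it cancels in $d\beta\wedge d\bar\beta$.
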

\begin{remark}
More generally, the same argument holds for \(\pr{X,G}\)-structures, with \(X=G/H\),
whenever there is a form \(\xi \in \Lm{n-1}{\LieG/\LieH}^H\) (where \(n=\dim \LieG - \dim \LieH\)) which is not closed in the differential of Lie algebra cohomology.
\end{remark}
\begin{proof}
Suppose that \(\pi \subset G\) is a cocompact lattice. 
Then \(M=G/\pi\) has a holomorphic \(\pr{X,G}\)-structure. 
More generally suppose that \(M\) is a compact complex surface with flat holomorphic \(\pr{X,G}\)-connection.
Write the induced Cartan connection \cite{Sharpe:1997} (induced by the right invariant Maurer--Cartan 1-form
on the model \(X\)) as 
\[
\omega=
\begin{pmatrix}
\alpha & \beta \\
0 & 0
\end{pmatrix}.
\]
By the Maurer--Cartan equation, \(0=d \alpha = d \beta -\alpha \wedge \beta\).
Since 
\[
\alpha \wedge \beta \wedge \bar\alpha \wedge \bar\beta
\]
is a volume form,
\begin{align*}
0 
&<
\int_M \alpha \wedge \beta \wedge \bar \alpha \wedge \bar \beta,
\\
&=
\int_M d\beta \wedge d \bar\beta,
\\
&=
\int_M d \pr{\beta \wedge d \bar\beta}
,
\\
\intertext{to which we apply Stokes's theorem:}
&=
\int_{\partial  M} \beta \wedge d \bar\beta,
\\
&=
0.
\end{align*}
\end{proof}

\begin{remark}
If \(X=G\) is a group, then the transformations of \(X\) which commute with \(G\) are maps \(f \colon G \to G\) so that \(f(gh)=g\, f(h)\) for all \(g, h \in G\), i.e. \(f(g)=g \, f(1)\), i.e. right translations. 
Any quotient \(X'\) is a quotient by a group \(\pi \subset G\) of right translations acting properly.
The quotient group \(G'\) is \(G/\pr{G \cap \pi}\), but where \(\pi\) acts on the right and \(G\) on the left, so \(G \cap \pi\) means \(Z(G) \cap \pi\) as subgroups of \(G\).
For \(X=\uAff{\C{}}\), the center of \(G=\uAff{\C{}}\) is 
\[
Z=Z(G)=\Set{\pr{2 \pi i k,0}|k \in \Z{}}
\]
so our quotients are \(X'=G/\pi\) and \(G'=G/\pr{Z \cap \pi}=G\) or \(G/nZ\), for some integer \(n \ge 0\), for each discrete subgroup \(\pi \subset G\).
\end{remark}

\begin{remark}
The foliation \(da=0\) of \(\uAff{\C{}}\) is biinvariant.
Any quotient \(X'\) inherits a \(G'\)-invariant holomorphic foliation.
Since \(G'\) acts transitively on \(X'\), \(G'\) also acts transitively on the space of leaves of the foliation.
The leaf space is a Riemann surface; call it \(\bar{X}'\).
The \(G\)-equivariant diagram
\[
\begin{tikzcd}
X \arrow{d} \arrow{r} & X' \arrow{d} \\
\bar{X} \arrow{r} & \bar{X}'
\end{tikzcd}
\]
makes \(\bar{X}'\) a quotient of the translation action on \(\bar{X}=\C{}\), so \(\bar{X}'=\C{}, \C{\times}\) or an elliptic curve \(\C{}/\Lambda\).
\end{remark}

\begin{remark}
For any quotient \(\pr{X',G'}\) of \(\pr{X,G}\), treat the discrete group \(\pi\) as a subgroup of \(G\) acting on the right.
Our group \(G=\uAff{\C{}}\) has an exact sequence
\[
1 \to \C{} \to G \to \C{} \to 1
\]
given by inclusion of \(\C{} = \left\{0\right\} \times \C{} \subset G=\C{2}\) and projection \((a,b) \in G \mapsto a \in \C{}\).
Applied to \(\pi\) this yields an exact sequence, say
\[
1 \to \pi_0 \to \pi \to \bar{\pi} \to 1,
\]
giving a holomorphic fiber bundle \(X'_0=\C{}/\pi_0 \to X'=X/\pi \to \bar{X}'=\C{}/\bar\pi\).
In particular, \(\pi_0, \bar\pi \subset \C{}\) are both discrete subgroups.
\end{remark}

\begin{remark}
For any discrete group \(\pi\) of automorphisms of \(X\) commuting with \(G\), the discrete subgroup \(\bar\pi \subset \C{}\) is fixed by automorphisms of \(G\), an invariant of any quotient \(\pr{X',G'}\).
The kernel \(1 \to \pi_0 \subset \pi \to \bar\pi \to 1\) is acted on by choice of the parameter \(\beta \in \C{\times}\).
Arrange that \(\pi_0=\left\{0\right\}\) or \(\pi_0=\Z{}\) or \(\pi_0=\Z{}[1,\tau]\) for some \(\tau\) in the upper half plane.
We will see that the generators of \(\pi\) are as in table~\vref{table:discrete.subgroups.aff.c}, up to isomorphism.
\end{remark}

\begingroup
\newcommand*{\abelian}{}
\newcommand*{\notabelian}{\cellcolor{gray!15}}
\begin{longtable}{@{}VAAA@{}}
\caption{Lists of generators for all of the discrete subgroups \(\pi \subset \uAff{\C{}}\) up to biholomorphic isomorphism of \(\uAff{\C{}}\).
Notation: \(a \in \C{\times}\), \(k \in \Z{\times}\), \(\tau \in \C{}\) with \(\im{\tau}>0\), \(b \in \C{}\), \(a_1, a_2\) are \(\R{}\)-linearly independent, \(k \in \Z{}\).
For the cells marked in gray, the group \(\pi\) generated by the entries is \emph{not} abelian.}\label{table:discrete.subgroups.aff.c} \\
\toprule
 & \multicolumn{3}{c}{\(\pi_0\)} \\ \cmidrule(r){2-4}
\rank{\bar\pi} & \left\{0\right\} & \Z{} & \Z{}[1,\tau] \\
\midrule
\endfirsthead
\multicolumn{3}{l}{continued} \\
\toprule
           & \multicolumn{3}{c}{\(\pi_0\)} \\ \cmidrule(r){2-4}
\rank{\bar\pi} & \left\{0\right\} & \Z{} & \Z{}[1,\tau] \\
\midrule
\endhead
\bottomrule
\multicolumn{3}{r}{\ldots} \\
\endfoot
\bottomrule
\endlastfoot
0 & \abelian\varnothing & \abelian\pr{0,1} & \abelian\pr{0,1}, \pr{0,\tau} \\
\\
1 & \abelian\pr{2\pi i k, 1} &  \abelian\pr{2 \pi i k,b}, \pr{0,1} &  \abelian\pr{2 \pi i k, b}, \pr{0,1}, \pr{0,\tau} 
\\
	& \abelian\pr{a,0} 
	& \notabelian \pr{2 \pi i\pr{k+\frac{1}{2}},0}, \pr{0,1} 
	& \notabelian \pr{2\pi i \pr{k+\frac{1}{2}}, 0}, \pr{0,1}, \pr{0,\tau}\\
%\abelian \pr{2\pi i k, 0}, \pr{0,1}, \pr{0,\tau}
%\\
%&  &  &  
&  &  &  \notabelian \pr{\pi i \pr{k+\frac{1}{2}}, 0}, \pr{0,1}, \pr{0,i}\\
%&  &  &  \abelian \pr{2\pi i \pr{k+\frac{3}{4}}, 0}, \pr{0,1}, \pr{0,i}\\
&  &  &  \notabelian \pr{2\pi i \pr{k+\frac{1}{6}}, 0}, \pr{0,1}, \pr{0,e^{\pi i/3}}\\
&  &  &  \notabelian \pr{2\pi i \pr{k+\frac{2}{6}}, 0}, \pr{0,1}, \pr{0,e^{\pi i/3}}\\
&  &  &  \notabelian \pr{2\pi i \pr{k+\frac{4}{6}}, 0}, \pr{0,1}, \pr{0,e^{\pi i/3}}\\
&  &  &  \notabelian \pr{2\pi i \pr{k+\frac{5}{6}}, 0}, \pr{0,1}, \pr{0,e^{\pi i/3}}\\
\\
2 & \abelian\pr{a_1,0}, \pr{a_2,0} & 
\end{longtable}
\endgroup

\begin{lemma}
Up to a biholomorphic automorphism of \(\uAff{\C{}}\), every discrete subgroup \(\pi \subset \uAff{\C{}}\) is in table~\vref{table:discrete.subgroups.aff.c}.
\end{lemma}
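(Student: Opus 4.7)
The plan is a case analysis based on the ranks of the two discrete subgroups $\pi_0 = \pi \cap \pr{\br{0} \times \C{}}$ and $\bar\pi \subset \C{}$ from the exact sequence $1 \to \pi_0 \to \pi \to \bar\pi \to 1$ of the preceding remark; by that remark both are discrete of rank $0$, $1$, or $2$. The $\beta$-parameter of the automorphism $\pr{a,b} \mapsto \pr{a,\gamma\pr{1-e^a}+\beta b}$ rescales $\pi_0$, so first I normalize $\pi_0$ to one of $\br{0}$, $\left<\pr{0,1}\right>$, or $\left<\pr{0,1},\pr{0,\tau}\right>$ with $\im\tau>0$; these give the three columns of the table.

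The essential constraint is that for any $\pr{a,b} \in \pi$, conjugation preserves $\pi_0$ and acts as $\pr{0,c} \mapsto \pr{0,e^a c}$, so $e^a$ must lie in the unit group of the lattice $\pi_0$. That group is trivial for $\pi_0=\br{0}$, equals $\br{\pm 1}$ for $\pi_0=\Z{}$ and for generic $\pi_0=\Z{}[1,\tau]$, equals $\br{\pm 1,\pm i}$ for $\tau=i$, and equals the sixth roots of unity for $\tau=e^{\pi i/3}$. This immediately rules out the empty cells in the rank-$2$ row: if $\pi_0\ne\br{0}$ and $\bar\pi$ has rank $2$, then any two lift generators have $e^{a_1},e^{a_2}$ roots of unity, forcing $a_1,a_2 \in i\R{}$ and contradicting $\R{}$-linear independence.

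For each remaining cell I pick lifts of generators of $\bar\pi$ and use the $\gamma$-parameter, which shifts $b$ by $\gamma\pr{1-e^a}$: when $e^a\ne 1$ this zeroes any chosen $b$, and when $e^a=1$ it does nothing. For $\bar\pi$ of rank $1$ generated by $a_0$ this produces the lines with $b$-entry zero whenever $e^{a_0}\ne 1$ and leaves the entries $\pr{2\pi i k,b}$ with $b$ a free parameter when $e^{a_0}=1$ (and, in the $\pi_0=\br{0}$ subcase with $a_0\in 2\pi i\Z{}$ and $b_0\ne 0$, a further $\beta$-rescaling fixes $b_0=1$). For $\bar\pi$ of rank $2$, the forced $\pi_0=\br{0}$ makes $\pi$ abelian, and the commutator formula $\left[\pr{a_1,b_1},\pr{a_2,b_2}\right]=\pr{0,b_1\pr{1-e^{a_2}}+b_2\pr{e^{a_1}-1}}$ reduces commutativity to the single relation $b_1\pr{e^{a_2}-1}=b_2\pr{e^{a_1}-1}$; since at least one $a_i \notin 2\pi i \Z{}$ (else $a_1,a_2 \in i\R{}$ again), a single $\gamma$-shift then zeroes both $b_i$, leaving $\pr{a_1,0},\pr{a_2,0}$.

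The main bookkeeping obstacle is matching each admissible value of $e^{a_0}$ in the special-lattice cases $\tau=i$ and $\tau=e^{\pi i/3}$ to the correct line of the table, and seeing that the freedom $a_0 \mapsto a_0+2\pi i k$ in lifting a given $\log e^{a_0}$ is genuinely parametric: automorphisms preserve the $a$-coordinate, so the cyclic subgroup $\bar\pi\subset\C{}$ is an invariant of $\pi$ and distinct $k$ produce genuinely inequivalent $\pi$.
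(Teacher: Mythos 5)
Your proposal is correct and follows essentially the same route as the paper's proof: the decomposition \(1 \to \pi_0 \to \pi \to \bar\pi \to 1\), normalization of \(\pi_0\) by the \(\beta\)-parameter, the constraint \(e^a\pi_0=\pi_0\) (your conjugation computation is the same as the paper's commutator \(\left[\pr{a,b},\pr{0,1}\right]=\pr{0,e^a-1}\)), and the \(\gamma\)-shift to normalize \(b\), with the rank-two \(\bar\pi\), trivial \(\pi_0\) case resolved by the same vanishing-commutator linear relation (the paper's \(\det A=0\)). No gaps.
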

\begin{proof}
If \(\bar\pi=\left\{0\right\}\) then \(\pi=\pi_0 \subset \C{}\) is \(\pi=\left\{0\right\}\) or \(\pi=\Z{}\) or \(\pi=\Z{}[1,\tau]\) for some \(\tau\) in the upper half plane, up to automorphism of \(G\).

If \(\bar\pi\) is generated by one element, and \(\pi_0=\left\{0\right\}\), then \(\pi=\left<\pr{a,b}\right>\) for some \(\pr{a,b} \ne \pr{0,0}\).
By an automorphism, 
\[
(a,b) \mapsto \pr{a, \gamma \pr{1-e^a} + \beta b}
\]
we can then arrange \(\pr{a,b}=\pr{a,0}\) or \(\pr{2 \pi i k, 1}\) for some \(k \in \Z{\times}\).

If \(\pi_0\) is generated by one element, we can arrange that that element is \(\pr{0,1}\). Take any element \(\pr{a,b} \in \pi\) and calculate the commutator
\[
\left[\pr{a,b},\pr{0,1}\right]
=
\pr{0,e^a-1} \in \pi_0.
\]
So \(e^a \in \Z{}\), and therefore replacing \(\pr{a,b}\) by its inverse, also \(e^{-a} \in \Z{}\) for all \(\pr{a,b} \in \pi\).
Therefore \(e^a = \pm 1\) so \(a = \pi i k\) for some \(k \in \Z{}\).
In particular, \(\bar\pi\) can be generated by at most one element: \(\bar\pi=\left<\pi i k\right>\).
If \(k\) is odd, we can arrange by automorphism that \(\pi\) is generated by \(\pr{\pi i k, 0}, \pr{0,1}\).

If \(\bar\pi\) is generated by two elements and \(\pi_0=\left\{0\right\}\) then \(\pi\) is generated by two elements \(\pr{a_1,b_1}, \pr{a_2, b_2}\) with \(a_1, a_2\) \(\Z{}\)-linearly independent.
Since \(\bar\pi\) is discrete, \(a_1, a_2\) are \(\R{}\)-linearly independent.
Let
\[
A=
\begin{pmatrix}
1-e^{a_1} & b_1 \\
1-e^{a_2} & b_2
\end{pmatrix}.
\]
There is an automorphism which takes \(\pr{b_1,b_2}\mapsto\pr{0,0}\) just when the linear system
\[
A
\begin{pmatrix}
\gamma \\
\beta
\end{pmatrix}
=
\begin{pmatrix}
0 \\
0
\end{pmatrix}
\]
has solutions \(\gamma, \beta\ne 0\).
Take commutator:
\[
\pr{a_1,b_1}
\pr{a_2,b_2}
\pr{a_1,b_1}^{-1}
\pr{a_2,b_2}^{-1}
=
\pr{0,-\det A} \in \pi_0.
\]
Since we assume that \(\pi_0=\left\{0\right\}\), \(\det A=0\); solve with some \(\pr{\gamma,\beta}\ne\pr{0,0}\).
If every solution \(\pr{\gamma,\beta}\) has \(\beta=0\), then \(e^{a_1}=e^{a_2}=1\), but \(a_1, a_2\) are linearly independent over \(\R{}\).
So there is a solution \(\pr{\gamma,\beta}\) with \(\beta \ne 0\) and we can assume that \(b_1=b_2=0\).
So \(\pi \subset \Lambda' \times \left\{0\right\}\) where \(\Lambda'=\left<a_1,a_2\right>\) so that \(\pi \subset G\) is discrete.

If \(\pi_0\) is generated by two elements, we can arrange by automorphism that \(\pi_0\) is generated by \(\pr{0,1}, \pr{0,\tau}\) with the imaginary part of \(\tau\) positive.
Take commutators:
\begin{align*}
\left[\pr{a,b},\pr{0,1}\right]
&=
\pr{0,e^a-1} \in \pi_0, \\
\left[\pr{a,b},\pr{0,\tau}\right]
&=
\pr{0,\pr{e^a-1}\tau} \in \pi_0.
\end{align*}
So \(e^a \in \Lambda\) and \(e^a \tau \in \Lambda\) and therefore \(e^a \Lambda \subset \Lambda\).
By the same argument, \(e^{-a}\Lambda \subset \Lambda\).
Therefore \(e^a \Lambda = \Lambda\).
This forces \(a\) to lie in the imaginary numbers, so that \(\bar\pi\) has rank at most one.
If \(a\) does not have the form \(2 \pi i k\) for some integer \(k\), then we can arrange \(b=0\) by automorphism.
Proceed through the list of lattices and their symmetries in one complex dimension.
\end{proof}

\begin{remark}
The fundamental group of a product of homogeneous Riemann surfaces is abelian, so if \(\pi\) is not abelian, then \(X' \to \bar{X}'\) is not a trivial bundle.
For each of the abelian groups in table~\vref{table:discrete.subgroups.aff.c}, we can map to a product surface \(X'\) as follows, taking \(\Lambda=\Z{}[1,\tau]\) and \(\Lambda'=\Z{}\left[a_1,a_2\right]\), and map \(\pr{a,b} \in X\) to:
\begin{tbl}{3}{AAA}{\pi & X\to X' & X'}
%\toprule
% \\
%\midrule
%\endfirsthead
%\toprule
%\pi & X\to X' & X' \\
%\midrule
%\endhead
%\bottomrule
%\endfoot
%\bottomrule
%\endlastfoot
(0,1) 
& \pr{a,\exp\of{2 \pi i e^{-a} b}} 
& \C{} \times \C{\times} 
\\
(0,1), (0,\tau) 
& \pr{a,e^{-a}b + \Lambda} 
& \C{} \times \pr{\C{}/\Lambda} 
\\
(a',0) 
& \pr{e^{2 \pi i a/a'},b} 
& \C{\times} \times \C{} 
\\
\pr{2\pi i k,1}
& \pr{e^{a/k},e^{-a}b-\frac{a}{2\pi i k}} 
& \C{\times} \times \C{} 
\\
\pr{2\pi i k,b'}, \pr{0,1}
& \pr{e^{a/k},\exp\of{2 \pi i e^{-a}b-\frac{ab'}{k}}} 
& \C{\times} \times \C{\times} 
\\
\pr{2\pi i k,b'}, \pr{0,1}, \pr{0,\tau}
& \pr{e^{a/k},e^{-a}b-\frac{ab'}{2 \pi i k} + \Lambda} 
& \C{\times} \times \pr{\C{}/\Lambda} \\
\pr{a_1,0}, \pr{a_2,0}
& \pr{a+\Lambda',b} 
& \pr{\C{}/\Lambda'} \times \C{}
%\\
%\pr{2 \pi i\pr{k+\frac{p}{q}},0}, \pr{0,1}, \pr{0,\tau}
%& \pr{e^{a/q},b+\Lambda} & \C{\times} \times \pr{\C{}/\Lambda}
\end{tbl}
Therefore the group \(\pi\) is abelian precisely when the holomorphic fiber bundle \(X'_0 \to X' \to \bar{X}'\) is holomorphically a product.
\end{remark}

\begin{remark}\label{remark:nonabelian.bundle}
The quotient \(X'=X/\pi\) for \(\pi=\left<\pr{2\pi i\pr{k+\frac{1}{2}},0}, \pr{0,1}\right> \subset \uAff{\C{}}\) is precisely the nonabelian \(\C{\times}\)-bundle over \(\C{\times}\); see Huckleberry and Livorni \cite{Huckleberry/Livorni:1981} p. 1100.
Each choice of \(k\) yields a different complex homogeneous surface, i.e. a different \(\uAff{\C{}}\)-action, but on an isomorphic complex surface.
Similarly, the bundles of elliptic curves corresponding to the other nonabelian discrete subgroups \(\pi \subset G\) are nontrivial, and all such bundles arise in this way uniquely modulo the value of \(k\), but different values of \(k\) yield nonisomorphic group actions. 
\end{remark}

\begin{tbl}[We label the different discrete subgroups of the group $D1$,  modulo isomorphism]{2}{AA}{\text{Label} & \text{Generators of $\pi$}}
D2 & \varnothing \\ 
D2_1 & \pr{0,1} \\ 
D2_2 & \pr{0,1}, \pr{0,\tau} \\
D2_3 & \pr{2\pi i k, 1} \\  
D2_4 & \pr{2 \pi i k,b}, \pr{0,1} \\
D2_5 & \pr{2 \pi i k, b}, \pr{0,1}, \pr{0,\tau} \\
D2_6 & \pr{a,0} \\
D2_7 & \pr{2 \pi i\pr{k+\frac{1}{2}},0}, \pr{0,1} \\
D2_8 & \pr{2\pi i \pr{k+\frac{1}{2}}, 0}, \pr{0,1}, \pr{0,\tau}\\
D2_9 & \pr{\pi i \pr{k+\frac{1}{2}}, 0}, \pr{0,1}, \pr{0,i}\\
D2_{10} & \pr{2\pi i \pr{k+\frac{1}{6}}, 0}, \pr{0,1}, \pr{0,e^{\pi i/3}}\\
D2_{11} & \pr{2\pi i \pr{k+\frac{2}{6}}, 0}, \pr{0,1}, \pr{0,e^{\pi i/3}}\\
D2_{12} & \pr{2\pi i \pr{k+\frac{4}{6}}, 0}, \pr{0,1}, \pr{0,e^{\pi i/3}}\\
D2_{13} & \pr{2\pi i \pr{k+\frac{5}{6}}, 0}, \pr{0,1}, \pr{0,e^{\pi i/3}}\\
D2_{14} & \pr{a_1,0}, \pr{a_2,0} 
\end{tbl}

\begin{tbl}
[%
The intersection of each discrete subgroup of $D1$ with the center of $D1$, i.e. the groups $\pi \cap G$ so that $G'=G/\pr{\pi \cap G}$ are the quotient groups acting on the quotients $X'$. Let $a$ represent an arbitrary complex number which is not rational, and $p/q$ be an arbitrary rational number in lowest terms.
Given a lattice generated by $1,\tau$, write $a$ for a complex number not in the rational span of $1,\tau$, and $p/q+r\tau/q$ for a complex number for which $p, q, r$ are integers with no common factor.
More generally, $\Lambda$ is a lattice in the complex plane.
]%
{3}{AAA}%
{\text{Label} & \text{Generators of $\pi$} & \text{Generators of $\pi \cap Z(G)$}}
D2 & \varnothing & (0,0) \\ 
D2_1 & \pr{0,1} & (0,0) \\ 
D2_2 & \pr{0,1}, \pr{0,\tau} & (0,0) \\
D2_3 & \pr{2\pi i k, 1} & (0,0) \\  
D2_4 & \pr{2 \pi i k,p/q}, \pr{0,1} & \pr{2 \pi i kq,0} \\
D2_4 & \pr{2 \pi i k,a}, \pr{0,1} & \pr{0,0} \\
D2_5 & \pr{2 \pi i k, a}, \pr{0,1}, \pr{0,\tau} & (0,0) \\
D2_5 & \pr{2 \pi i k, \frac{p+r\tau}{q}}, \pr{0,1}, \pr{0,\tau} & \pr{2 \pi i kq, 0} \\
D2_6 & \pr{2 \pi i a,0} & \pr{0,0} \\
D2_6 & \pr{2 \pi i \frac{p}{q},0} & \pr{2\pi i p,0}\\
D2_7 & \pr{2 \pi i\pr{k+\frac{1}{2}},0}, \pr{0,1} & \pr{2 \pi i (2k+1), 0} \\
D2_8 & \pr{2\pi i \pr{k+\frac{1}{2}}, 0}, \pr{0,1}, \pr{0,\tau} &  \pr{2 \pi i (2k+1), 0} \\
D2_9 & \pr{\pi i \pr{k+\frac{1}{2}}, 0}, \pr{0,1}, \pr{0,i} &  \pr{2 \pi i (2k+1), 0} \\
D2_{10} & \pr{2\pi i \pr{k+\frac{1}{6}}, 0}, \pr{0,1}, \pr{0,e^{\pi i/3}} &  \pr{2 \pi i (6k+1), 0} \\
D2_{11} & \pr{2\pi i \pr{k+\frac{2}{6}}, 0}, \pr{0,1}, \pr{0,e^{\pi i/3}} & \pr{2 \pi i (2k+1), 0} \\
D2_{12} & \pr{2\pi i \pr{k+\frac{4}{6}}, 0}, \pr{0,1}, \pr{0,e^{\pi i/3}} &  \pr{2 \pi i (2k+1), 0} \\
D2_{13} & \pr{2\pi i \pr{k+\frac{5}{6}}, 0}, \pr{0,1}, \pr{0,e^{\pi i/3}} &  \pr{2 \pi i (2k+1), 0} \\
D2_{14} & \Lambda \times \left\{0\right\} & \pr{\Lambda \cap 2 \pi i \Z{}} \times \left\{0\right\}
\end{tbl}

\section{%
\texorpdfstring%
{$B\beta{1}$: constant coefficient linear ODE}%
{Bbeta1: constant coefficient linear ODE}%
}%

\subsection{Definition}

Pick an effective divisor \(D\) on \(\C{}\) of positive degree. 
In Olver's notation \cite{Olver:1995} p. 472, the degree of \(D\) is written as \(k\).
Let \(p(z)\) be the monic polynomial with zero locus \(D\) (counting multiplicities).
Let \(V_D\) be the set of all holomorphic functions \(f \colon \C{} \to \C{}\) so that 
\[
p\of{\pd{}{z}} f(z) = 0.
\]
Let \(G_D=\C{} \ltimes V_D\) with the group operation
\[
\pr{t_0,f_0(z)}\pr{t_1,f_1(z)}
=
\pr{t_0+t_1,f_0(z)+f_1\of{z-t_0}}.
\] 
So \(\dim_{\C{}} G_D=1+\deg D\).
Let \(G_D\) act on \(\C{2}\) by the faithful group action
\[
\pr{t,f}\pr{z,w}
=
\pr{z+t,w+f\of{z+t}}.
\] 
The stabilizer of the origin of \(\C{2}\) is the subgroup of pairs \((0,f)\) so that \(f(0)=0\).
The Lie algebra of \(G_D\) is spanned by the vector fields \(\partial_z, f(z) \partial_w\) for \(f \in V_D\).

\subsection{Equivalent actions}

Lie writes the action of \(G_D\) on \(\C{2}\) as \(B\beta{1}\) if \(\deg D \ge 2\), and as \(D1\) if \(D=[0]\) and as \(D2\) if \(D=[\lambda]\) with \(\lambda \ne 0\).
The case \(D1\) is obviously the translation plane as in section~\ref{section:D1}; we ignore it.
We will see below that if \(\lambda \ne 0\) then we can arrange by isomorphism that \(\lambda=-1\), i.e. the action \(D2\) has \(D=[-1]\) and \(G_D\) is isomorphic to \(\uAff{\C{}}\) by \(h \colon (t,f) \in G_{[-1]} \mapsto (a,b) \in \uAff{\C{}}\), \(a=t, f(z)=be^{-z}\), and the homogeneous spaces are identified by the biholomorphism \(\delta \colon (z,w) \in \C{2} \mapsto \pr{z,e^z w} \in \C{2}\).
Henceforth ignore \(D1\) and \(D2\), i.e. \(G_D\) for effective divisors \(D\) of degree 1; assume that our divisor \(D\) has degree \(\ge 2\).

Consider three different families of morphisms.
\begin{enumerate}
\item
If \(g \in G_D\), \(\delta(z,w)=g\of{z,w}\) is equivariant under the isomorphism \(h=\Ad(g) \colon G_D \to G_D\). 
\item
Write a divisor \(D\) as
\[
D = 
n_1 \left[\lambda_1\right] + 
n_2 \left[\lambda_2\right] + 
\dots
+ 
n_k \left[\lambda_k\right],
\]
where \(n_j \in \Z{}_{> 0}\) is a multiplicity, and \(\lambda_i \in \C{}\) is a point.
Pick any nonzero complex numbers \(\mu,\nu\) and
let
\[
E = 
n_1 \left[\mu \lambda_1\right] + 
n_2 \left[\mu \lambda_2\right] + 
\dots
+ 
n_k \left[\mu \lambda_k\right].
\]
Then \(\delta \colon \pr{z,w}\mapsto\pr{\mu z,\nu w}\) is equivariant under
\[
h \colon (t,f) \in G_D \mapsto
\pr{\mu t, \nu f\of{\frac{z}{\mu}}} \in G_E.
\]
\item
Pick some \(f_0 \in V_{D+[0]}\). 
Then \(\delta \colon (z,w) \in \C{2} \mapsto \pr{z,w+f_0(z)} \in \C{2}\) is equivariant under
\[
h \colon \pr{t,f} \in G_D \mapsto \pr{t,f(z)+f_0(z)-f_0(z-t)} \in G_D.
\]
\end{enumerate}

\begin{lemma}\label{lemma:aut.G.D}
Pick effective divisors \(D\) and \(E\) on \(\C{}\).
If there is a biholomorphism \(\delta \colon \C{2} \to \C{2}\) equivariant for a morphism \(h \colon G_D \to G_E\), then modulo compositions of the three morphisms above, \(\delta=\id\) and \(E-D\) is effective or zero and \(h\) is the inclusion morphism.
Isomorphism classes of complex homogeneous spaces \(\pr{\C{2},G_D}\) are parameterized by isomorphism classes of effective divisors on \(\C{}\) modulo rescaling: \(D=\sum n_{\lambda} \left[\lambda\right] \mapsto \sum n_{\lambda} \left[\mu \lambda\right]\).
\end{lemma}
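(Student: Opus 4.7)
The plan is to use the canonical fibration \(\pr{z,w} \mapsto z\) on \(\C{2}\) together with the normal subgroup \(V_D \subset G_D\) preserving its fibers. The crucial algebraic fact is that evaluation \(f \mapsto f\pr{z_0}\) is a surjection \(V_D \to \C{}\) for each \(z_0\), since \(V_D\) contains the exponentials \(e^{\lambda z}\) for \(\lambda\) in the support of \(D\); moreover \(\dim V_D \ge 2\) because \(\deg D \ge 2\). Write \(\delta\pr{z,w} = \pr{Z\pr{z,w},W\pr{z,w}}\) and \(h\pr{0,f} = \pr{\eta_0\pr{f},\phi_0\pr{f}}\) with \(\phi_0\pr{f} \in V_E\). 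Equivariance under \(\pr{0,f}\) reads
\[
Z\pr{z,w+f\pr{z}} = Z\pr{z,w} + \eta_0\pr{f}, \qquad W\pr{z,w+f\pr{z}} = W\pr{z,w} + \phi_0\pr{f}\pr{Z\pr{z,w} + \eta_0\pr{f}}.
\]
Differentiating the first equation in \(w\) and using surjectivity of evaluation forces \(Z_w\) to be constant in \(w\), so \(Z = c\pr{z} w + Z_0\pr{z}\) with \(c\pr{z} f\pr{z} = \eta_0\pr{f}\) independent of \(z\) for every \(f \in V_D\); since \(\dim V_D \ge 2\), this forces \(c \equiv 0\) and \(\eta_0 \equiv 0\). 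So \(\delta\) preserves the foliation \(dz = 0\), and the same argument applied to \(W\) gives \(W\pr{z,w} = A\pr{z} w + B\pr{z}\) with \(A\) nowhere zero.

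Next I would apply equivariance under the translation subgroup \(\pr{t,0}\). Matching first coordinates gives \(Z\pr{z+t} = Z\pr{z} + \eta\pr{t}\) with \(\eta\) holomorphic and additive, so \(\eta\pr{t} = \mu t\) and \(Z\pr{z} = \mu z + Z_0\) for some \(\mu \in \C{\times}\). The \(w\)-coefficient of the second factor yields \(A\pr{z+t} = A\pr{z}\), so \(A = \nu \in \C{\times}\) is a nonzero constant. A type~(1) morphism (pre-composition by the translation \(\pr{-Z_0/\mu,0} \in G_D\)) removes \(Z_0\), and a type~(2) morphism with parameter \(\mu\) removes \(\mu\); I may therefore assume \(Z\pr{z} = z\). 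The constant-term equation now reads \(\nu\pr{B\pr{z+t} - B\pr{z}} = \phi\pr{t}\pr{z+t}\) with \(\phi\pr{t} \in V_E\); setting \(u = z+t\) gives \(B\pr{u} - B\pr{u-t} \in V_E\) for every \(t\), so \(p_E\pr{\partial} B\) is translation-invariant in \(u\), hence constant, placing \(B \in V_{E+[0]}\).

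Because \(B \in V_{E+[0]}\), the target-side type~(3) morphism \(\pr{z,w}\mapsto\pr{z,w - B\pr{z}}\) is legitimate; post-composing \(\delta\) with it kills \(B\), and a final type~(2) morphism \(\pr{z,w}\mapsto\pr{z,w/\nu}\) kills \(\nu\). After these reductions \(\delta = \id\), and equivariance then gives \(h\pr{t,f}\cdot\pr{z,w} = \pr{t,f}\cdot\pr{z,w}\) for all arguments, so \(h\pr{t,f} = \pr{t,f}\); this is well-defined as an element of \(G_E\) exactly when \(V_D \subseteq V_E\), i.e., when \(p_D\) divides \(p_E\), i.e., when \(E - D\) is effective or zero. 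For the parameterization statement, isomorphism of two such pairs requires both reductions, giving equality of divisors modulo the rescaling \(\sum n_\lambda [\lambda] \mapsto \sum n_\lambda [\mu\lambda]\) that is the residual freedom in the type~(2) morphism. The step I expect to be most delicate is confirming that the additive correction \(B\) lands in \(V_{E+[0]}\) rather than in some larger or incomparable space; this is where the extra factor of \([0]\) appears naturally from the fact that \(p_E\pr{\partial}B\) is only forced to be constant, not zero, making a type~(3) morphism available at exactly the right level.
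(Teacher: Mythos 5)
Your proof is correct, and its endgame coincides with the paper's: the relation \(B(u)-B(u-t)\in V_E\) for all \(t\) forcing \(p_E(\partial)B\) to be constant, hence \(B\in V_{E+[0]}\), followed by a type~(3) normalization on the target side, is exactly the paper's argument, and your identification of this as the delicate point is right. Where you genuinely diverge is in the first half. The paper first normalizes \(\delta(0,0)=(0,0)\) and \(T(t,0)=t\) using translations, obtains \(Z(z,w)=z+Z(0,w)\), and then must rule out \(w\)-dependence of \(Z\) and a nontrivial \(T(f)\) by expanding \(f\in V_D\) in the basis \(e^{\lambda_j z}z^k\), tracking how the coefficients of \(f(z-t)\) vary with \(t\), and splitting into cases according to whether \(D\) has support away from \(0\) or is concentrated at \(0\) (the polynomial case). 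You instead exploit the normal subgroup \(\{0\}\times V_D\) first: surjectivity of the evaluation map \(V_D\to\C{}\) plus differentiation in \(w\) shows \(Z\) and \(W_w\) are affine in \(w\), and the identity \(c(z)f(z)=\eta_0(f)\) together with \(\dim V_D\ge 2\) kills the off-diagonal term without any case analysis on the structure of \(D\). This is cleaner and more uniform than the paper's coefficient-tracking argument; the trade-off is that it leans on \(\deg D\ge 2\) (which is the section's standing hypothesis, also needed implicitly by the paper's own case split) and on the effectiveness of the action to read off \(h=\id\) at the end. The only cosmetic blemish is a stray factor of \(\nu\) in your constant-term equation, which is harmless since \(V_E\) is a vector space.
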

\begin{proof}
Suppose that \(\delta \colon \C{2} \to \C{2}\) is a biholomorphism, equivariant for a holomorphic Lie group morphism \(h \colon G_D \to G_E\).
Since the actions of \(G_D\) and \(G_E\) on \(\C{2}\) are effective, \(h\) is injective.
Arrange that \(\delta(0,0)=(0,0)\) by conjugating with the first of our families of morphisms.

Write \(\delta\) as \(\delta(z,w)=(Z,W)\) and \(h(t,f)=(T,F)\).
Since \(h\) is a homomorphism, \(T\of{t_0+t_1,0}=T\of{t_0,0}+T\of{t_1,0}\), so that \(T\of{t,0}=c t\) for some constant \(c \ne 0\).
Conjugate with a morphism from the second of our families above to arrange that \(T(t,0)=t\).
Equivariance says
\begin{align*}
Z(z+t,w+f(z+t))&=Z(z,w)+T(t,f), \\
W(z+t,w+f(z+t))&=W(z,w)+F_{t,f}(Z(z,w)+T(t,f)).
\end{align*}
Let \(f=0\) to get \(Z(z+t,w)=Z(z,w)+t\), i.e. \(Z(z,w)=z+Z(0,w)\).
Let \(Z(w)=Z(0,w)\).
Since \(h\) is a homomorphism,
\[
T(t,0)+T(0,f)=T(t,f(z-t))=T(0,f)+T(t,0)=T(t,f(z)).
\]
Write \(T(t,f)=t+T(f)\) where \(T(f(z))=T(f(z-t))\) for any \(t\).
Write each \(f \in V_D\) as
\[
f(z) = \sum_j e^{\lambda_j z} \sum_{k=0}^{n_j-1} a_{jk} z^k.
\]
Since \(T(f)\) is holomorphic and additive in \(f\), it is linear in the coefficients \(a_{jk}\) of \(f\).
The coefficients of \(f(z-t)\) vary exponentially as we vary \(t\), unless \(\lambda_j=0\), and so \(T(f)\) depends only on the \(\lambda_j=0\) coefficients.

Suppose first that \(\lambda \ne 0\) is a point of positive multiplicity of \(D\), i.e. \(f(z)=ae^{\lambda z} \in V_D\) for all \(a \in \C{}\).
Then
\[
Z\of{z+t,w+ae^{\lambda z}}=z+Z(w)+t,
\]
for all \(t\), and in particular, if we set \(t=0\), the right hand side is independent of the variable \(a\), so \(Z(z,w)=z\) for all \(z,w\) and \(T(t,f)=t\) for all \(t,f\).

If \(f(z)\) is a polynomial then \(f(z-t)\) has coefficients depending on \(t\) by the binomial theorem, and \(T(f(z-t))\), to be constant in \(t\) depends only on the highest order coefficient of \(f\), say \(T\of{z^n}=b\).
Then 
\[
Z(z+t,w+f(z+t))=Z(z,w)+T(t,f)
\]
but \(T(t,f)=t+T(f)\) so 
\[
Z\of{w+a\pr{z+t}^n}=Z(w)+ab
\]
for any complex numbers \(a\) and \(t\).
Vary \(t\) to see that \(Z(w)\) is constant, and plug in \(Z(0)=0\) to find \(Z(w)=0\) for all \(w\) and \(b=0\), i.e. \(T(f)=0\).
Therefore \(Z(z,w)=z\) for all \(z, w\) and \(T(t,f)=t\) for all \(t, f\).

Plug in \(Z(z,w)=z\) and \(T(t,f)=t\) to our equation for \(W\) to find
\begin{equation}\label{equation:W}
W\of{z+t,w+f(z+t)} = W(z,w) + F_{t,f}\of{z+t}.
\end{equation}
Differentiate both sides in \(w\), and use transitivity of the action of \(G_D\) to conclude that
\[
W(z,w) = \nu w+f_0(z),
\]
for some holomorphic function \(f_0(z)\) and some constant \(\nu\).
Because \(\pr{Z,W}\) is a biholomorphism, \(\nu \ne 0\).
Conjugate with one of the isomorphisms above to arrange that \(\nu=1\).
Equation~\ref{equation:W} becomes
\[
F_{t,f}(z) = f_0(z) - f_0(z-t) + f(z).
\]
Suppose that \(p_E(z)\) is the monic polynomial with divisor \(E\).
Since \(p_E\of{\partial_z}F_{t,f}(z)=0\), plug in \(t=0\) and get \(V_D \subset V_E\), i.e. \(E-D\) is effective or zero.
Instead plug in \(f=0\)  with arbitrary \(t\):
\[
p_E\of{\partial_z}\pr{f_0(z)-f_0(z-t)}=0.
\]
Differentiate in \(t\): \(0=p_{E+[0]}\of{\partial_z}f_0(z)\), i.e. \(f_0(z) \in V_{E+[0]}\).
Conversely, take any \(f_0(z) \in V_{E+[0]}\) and see \(\partial_z p_E\of{\partial_z}f_0(z)=0\) i.e. \(p_E\of{\partial_z}f_0(z)\) is constant, independent of \(z\), so
\[
p_E\of{\partial_z}f_0(z)=p_E\of{\partial_z}f_0(z-t)=0.
\]
Conjugate with our third and final isomorphism above to arrange \(f_0=0\).
So \(F_{t,f}(z)=f(z) \in V_E\) for all \(f \in V_D\), i.e. \(h=\id\), and \(\delta=\id\).
\end{proof}

\subsection{Quotient spaces}

There are infinitely many quotient spaces of \(\C{2}\) with the same Lie algebra action germ as \(\pr{X,G}=\pr{\C{2},G_D}\).
In each of our examples, let \(D\) be any effective divisor on \(\C{}\) of degree \(\ge 2\).

\begin{example}[\(B\beta1A\)]\label{example:B.beta.1.A}
Take any discrete subgroup \(\Delta \subset \C{}\) acting on \(X=\C{2}\) by translations on the second coordinate.
We  can assume that \(\Delta=\Z{}\) or \(\Delta=\Z{}[1,\tau]\) for some complex number \(\tau\) in the upper half plane.
The action of \(G_D\) commutes with that of \(\Delta\) so descends to an action on \(X'=\C{} \times \pr{\C{}/\Delta}\).
\begin{enumerate}
\item[($B\beta1A0$)] If \(D\) has degree \(0\) at \(0\), let \(G'=G_D\).
\item[($B\beta1A1$)] If \(D\) has positive degree at \(0\), \(G'=\C{} \rtimes \pr{V_D/\Delta} = G_D/\Delta\).
\end{enumerate}
\end{example}

In our subsequent examples we make use of the notation: for any complex number \(\lambda\), let \(\Z{} \rtimes_{\lambda} \C{}\) be the group with multiplication
\[
\pr{k,s}\pr{k',s'} = \pr{k+k',s+e^{\lambda k}s'}
\]
and action on \(X=\C{2}\): 
\[
\pr{k,s}\pr{z,w}=\pr{z+k,e^{\lambda k}w+s}.
\]

\begin{example}[\(B\beta1B\)]\label{example:B.beta.1.B}
Suppose that 
\[
D = \left[\lambda\right]
+
\sum_j \left[\lambda + 2 \pi \ii k_j\right]
\]
where \(k_j\) are relatively prime positive integers.
Pick a positive integer \(n\).
Get \(G_D\) to act on \(X'=\C{\times} \times \C{}\) by
\[
\pr{t,f}(Z,W)=\pr{Z',W'}
\]
where we write \(f(z)=e^{\lambda z}P\of{e^{2 \pi i z}}\) and \(Z'=e^{2 \pi i t/n} Z\) and \(W'=e^{-\lambda t}W + P\of{\pr{Z'}^n}\).
This action is equivariant for the local biholomorphism
\[
\pr{z,w} \in \C{2}=X \mapsto \pr{Z,W}=\pr{e^{2 \pi iz/n},e^{-\lambda z}w} \in \C{\times} \times \C{}=X',
\]
Let \(\pi=\left<\pr{n,0}\right> \subset \Z{} \rtimes_{\lambda} \C{}\) and let
\[
G'=G_D/\pr{\Z{} \cap \pr{2 \pi i/\lambda n}\Z{}},
\]
and check that \(X'=X/\pi\) so that \(\pr{X',G'}\) is a quotient of \(\pr{X,G}\).
\end{example}

\begin{example}[\(B\beta1C\)]
Pick some positive integers \(d_1 < d_2 < \dots < d_N\).
Let \(G'\) be the set of pairs \(\pr{\tau,P}\) where \(\tau \in \C{}\) and \(P\) is a complex polynomial of the form \(P(Z)=\sum c_j Z^{d_j}\), with multiplication
\[
\pr{\tau_0,P_0}\pr{\tau_1,P_1}
=
\pr{\tau_0+\tau_1,P_0\of{Z}+P_1\of{e^{-2 \pi i \tau_0} Z}}.
\]
Let \(G'\) act on \(X'=\C{\times} \times \C{}\) by
\[
\pr{\tau,P}(Z,W)=\pr{e^{2 \pi i \tau} Z, W+P\of{e^{2 \pi i \tau}Z}-\tau}.
\]
To translate this action into a quotient action, we let \(m=d_1\) and let \(n\) be the greatest common divisor of the \(d_j-d_1\): \(d_1=m, d_j=m+nk_j\) for relatively prime positive integers \(k_1, k_2, \dots, k_{N-1}\).
Let \(\lambda=2 \pi i m/n\) and let \(D=\left[\lambda\right] + \sum \left[\lambda+2\pi i k_j\right]\).
Let \(\pi = \left<\pr{n,1}\right> \subset \Z{} \times_{\lambda} \C{}\).
Write each element \(f \in V_D\) as \(f(z)=P\of{e^{2 \pi i z/n}}\) for a unique polynomial \(P\), identifying \(G'=G_D\) by \(\pr{t,f}\mapsto\pr{t/n,P}\), i.e. let \(G_D\) act on \(X'\) by
\[
\pr{t,f}\pr{Z,W} = \pr{Z',W'},
\]
where \(Z'=e^{2 \pi i t/n}Z\) and \(W'=W-\frac{t}{n}+P\of{Z'}\).
This action is equivariant for the map \(\pr{z,w} \in X=\C{2} \mapsto \pr{Z,W}=\pr{e^{2 \pi i z/n}, w-z/n}\in X'\).
Let \(\pi=\left<\pr{n,1}\right> \subset \Z{} \rtimes_{\lambda} \C{}\) and 
and check that \(X'=X/\pi\) so that \(\pr{X',G'}\) is a quotient of \(\pr{X,G}\).
\end{example}

\begin{example}[\(B\beta1D\)]
Pick a positive integer \(n\), and relatively prime positive integers \(k_1, k_2, \dots, k_N\).
Let \(D=\left[0\right]+\sum_j \left[2 \pi i k_j\right]\).
Let \(G_D\) act on \(X'=\C{\times} \times \C{\times}\) by
\[
\pr{t,f}\pr{Z,W}=\pr{\dot{Z},\dot{W}},
\]
where \(\dot{Z}=e^{2 \pi i t/n}Z\) and \(\dot{W}=We^{2\pi iP\of{\dot{Z}^n}}\) for \(f(z)=P\of{e^{2 \pi i z}}\).
The map
\[
\pr{z,w} \in X=\C{2} \mapsto \pr{Z,W}=\pr{e^{2 \pi i z/n},e^{2 \pi i w}} \in X'
\]
is \(G_D\)-equivariant, and the fundamental group of \(X'\) is
\[
\pi = \left<(n,0),(0,1)\right> \subset G_D,
\]
so \(G'=G_D/\pi\).
\end{example}

\begin{example}[\(B\beta1E\)]
Pick an integer \(m \ne 0\), and a positive integer \(n\), and relatively prime positive integers \(k_1, k_2, \dots, k_N\) and a complex number \(s\).
Let \(\lambda=2\pi i m/n\).
Let \(D=\left[\lambda\right]+\sum_j \left[\lambda+2 \pi k_j\right]\).
Write each \(f \in V_D\) as \(f(z) = e^{2 \pi i m z/n} P\of{e^{2\pi i z}}\).
Let \(G_D\) act on \(X'=\C{\times} \times \C{\times}\) by
\[
\pr{t,f}\pr{Z,W}=\pr{\dot{Z},\dot{W}},
\]
where \(\dot{Z}=e^{2 \pi i t/n}Z\) and \(\dot{W}=We^{2\pi i\pr{-st/n+\dot{Z}^m P\of{\dot{Z}^n}}}\).
The map \(X=\C{2} \to X'\):
\[
\pr{z,w} \mapsto \pr{Z,W}=\pr{e^{2 \pi i z/n},e^{2 \pi i \pr{nw-sz}/n}}
\]
is \(G_D\)-equivariant, and the fundamental group of \(X'\) is
\[
\pi = \left<(n,s),(0,1)\right> \subset G_D,
\]
so \(G'=G_D/\pi\).
\end{example}

\begin{example}[\(B\beta1F\)]
Pick an integer \(m \ne 0\), and a positive integer \(n\), and relatively prime positive integers \(k_1, k_2, \dots, k_N\).
Let \(\lambda=\pi i (2m+1)/n\).
Let \(D=\left[\lambda\right]+\sum_j \left[\lambda+2 \pi k_j\right]\).
The action of \(G'=G_D\) on \(X=\C{2}\) commutes with the action of 
\[
\pi = \left<(n,0),(0,1)\right> \subset \Z{} \rtimes_{\lambda} \C{}.
\]
The group \(\pi\) acts on \(X\) by \(\pr{kn,s} \in \pi, \pr{z,w} \in X \mapsto \pr{z+kn,(-1)^k w+s}\).
The same group \(\pi\) also acts on \(\C{}\) by \(\pr{kn,s}z=z+kn\).
The map \(\pr{z,w} \in X \mapsto z \in \C{}\) is equivariant.
The quotient \(X'=X/\pi\) maps to the quotient \(\C{}/\pi=\C{\times}\) by \(\pr{z,w} \mapsto e^{2 \pi i z/n}\), a holomorphic principal bundle \(\C{\times} \to X' \to \C{\times}\), the unique nontrivial holomorphic principal \(\C{\times}\)-bundle over \(\C{\times}\); see remark~\vref{remark:nonabelian.bundle}.
\end{example}

\begin{example}[\(B\beta1G\)]
Pick a positive integer \(n\), and relatively prime positive integers \(k_1, k_2, \dots, k_N\) and a complex number \(\tau\) in the upper half plane.
Let \(D=\left[0\right]+\sum_j \left[2 \pi i k_j\right]\), let \(\pi_0=\Z{}[1,\tau]\), and let
\[
\pi = \left<(n,0), (0,1), (0,\tau)\right> \subset \Z{} \times \C{}.
\]
The action of \(G'=G_D\) on \(X=\C{2}\) commutes with the action of \(\pi\), with quotient map
\[
\pr{z,w} \in X=\C{2} \mapsto \pr{Z,W}=\pr{e^{2 \pi i z/n},w+\pi_0} \in X'=\C{2}/\pi=\C{\times} \times \pr{\C{}/\pi_0}.
\]
The action is
\[
\pr{t,f}\pr{Z,w+\pi_0}=\pr{\dot{Z},\dot{w}+\pi_0}
\]
where, if \(f(z)=P\of{e^{2 \pi i z}}\) then \(\dot{Z}=e^{2 \pi i t/n}Z\) and \(\dot{w}=w+P\of{\dot{Z}^n}\).
\end{example}

\begin{example}[\(B\beta1H\)]
Pick an integer \(m \ne 0\), and a positive integer \(n\), relatively prime positive integers \(k_1, k_2, \dots, k_N\), a complex number \(s\) and a complex number \(\tau\) in the upper half plane.
Let \(\lambda=2 \pi i m/n\), let \(D=\left[\lambda\right]+\sum_j \left[\lambda+2 \pi i k_j\right]\), let \(\pi_0=\Z{}[1,\tau]\), and let
\[
\pi = \left<(n,s), (0,1), (0,\tau)\right> \subset Q_D \rtimes \C{}.
\]
The action of \(G'=G_D\) on \(X=\C{2}\) commutes with the action of \(\pi\), with quotient map
\[
\pr{z,w} \in X=\C{2} \mapsto \pr{Z,W}=\pr{e^{2 \pi i z/n},w-\frac{sz}{n}+\pi_0} \in X'=\C{2}/\pi=\C{\times} \times \pr{\C{}/\pi_0}.
\]
The action of \(G'\) on \(X'\) is
\[
\pr{t,f}\pr{Z,w+\pi_0}=\pr{\dot{Z},\dot{w}+\pi_0}
\]
where, if \(f(z)=e^{2\pi i m z/n} P\of{e^{2 \pi i z}}\) then \(\dot{Z}=e^{2 \pi i t/n}Z\) and \(\dot{w}=w-st/n+\dot{Z}^m P\of{\dot{Z}^n}\).
\end{example}

\begin{example}[\(B\beta1I\)]
Pick a positive integer \(n\), a complex number \(\tau\) in the upper half plane, and a complex number \(\lambda\) so that \(e^{\lambda n}\ne 1\) and \(e^{\lambda n} \pi_0=\pi_0\) where \(\pi_0=\Z{}\left[1,\tau\right]\).
For generic \(\tau\), this requires that \(e^{\lambda n}=-1\), but for \(\tau=i\) we can also allow \(e^{\lambda n}=\pm i\), while for \(\tau=e^{2 \pi i/3}\) we can also allow \(e^{\lambda n}=\pm e^{\pm 2 \pi i/3}\).
Pick relatively prime positive integers \(k_1, k_2, \dots, k_N\).
Let \(D=\left[\lambda\right]+\sum_j \left[\lambda + 2 \pi i k_j\right]\).
The group
\[
\pi = \left<(n,0), (0,1), (0,\tau)\right> \subset \Z{} \rtimes_{\lambda} \C{}.
\]
acts on \(\C{2}\) by
\begin{align*}
\pr{n,0}\pr{z,w}&=\pr{z+1,e^{\lambda n} w},  \\
\pr{0,1}\pr{z,w}&=\pr{z,w+1},  \\
\pr{0,\tau}\pr{z,w}&=\pr{z,w+\tau}.
\end{align*}
The surface \(X'=\C{2}/\pi\) is a bundle of elliptic curves \(W+\pi_0 \in \C{}/\pi_0\) over \(e^{2 \pi \ii z} \in \C{\times}\), with transition map \(\pr{z,w+\pi_0} \mapsto \pr{z+1,e^{\lambda n}w+\pi_0}\).
This transition map acts nontrivially on cycles in the elliptic curve, so gives us a topologically nontrivial holomorphic fiber bundle of elliptic curves; see example~\vref{example:nontrivial.bundle}.
Every holomorphic fiber bundle \(\C{}/\Lambda \to X' \to \C{\times}\) arises uniquely in this way.
Since \(\pi\) commutes with the action of \(G_D\), this action drops to an action of \(G'=G_D\) on \(X'\).
\end{example}

\begin{proposition}\label{proposition:quotients.B.beta.1}
Take a quotient \(\pr{X',G'}=\pr{X/\pi,G/\pr{G \cap\pi}}\) of \(\pr{X,G}=\pr{\C{2},G_D}\) for an effective divisor \(D\) on \(\C{}\) of degree \(\ge 2\).
Either \(\pi=\left\{1\right\}\) or, up to isomorphism, \(\pr{X',G'}\) is a unique one of the examples \(B\beta{1}A, B\beta{1}B, \dots, B\beta{1}I\). 
In each of those examples, we can replace \(\tau\) by \(\PSL{2,\Z{}}\)-action without altering the isomorphism type.
Otherwise, all parameters in each of the examples \(B\beta{1}A, \dots, B\beta{1}I\) are determined by the isomorphism type.
\end{proposition}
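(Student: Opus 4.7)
The plan is first to determine the group $C$ of biholomorphisms of $X=\C{2}$ commuting with $G_D$, and then to enumerate its discrete subgroups $\pi$ acting freely and properly. Writing a centralizing biholomorphism as $\delta\of{z,w}=\pr{Z\of{z,w},W\of{z,w}}$ and imposing commutation with the $z$-translations $\pr{t,0}\in G_D$ forces $Z\of{z,w}=z+A\of{w}$ and $W\of{z,w}=B\of{w}$. Commutation with each $\pr{0,f}\in G_D$, $f\in V_D$, further gives $A\of{w+f\of{z}}=A\of{w}$ and $B\of{w+f\of{z}}=B\of{w}+f\of{z+A\of{w}}$. Because $V_D$ contains functions taking any prescribed value at any point, $A$ must be constant, call it $c$, and the evaluation map $V_D\to\C{2}$, $f\mapsto\pr{f\of{0},f\of{c}}$, must have one-dimensional image, say $\br{\pr{\alpha,\mu\of{c}\alpha}:\alpha\in\C{}}$; then $B\of{w}=\mu\of{c}w+d$. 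Letting $T_D\subset\C{}$ be the set of admissible $c$, the centralizer acts on $X$ by $\pr{z,w}\mapsto\pr{z+c,\mu\of{c}w+d}$ with $\mu\colon T_D\to\C{\times}$ a character, so $C=T_D\ltimes_\mu\C{}$.

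Next I analyze $T_D$ by cases on $D$. If some point $\lambda_0$ of $D$ has multiplicity $\ge 2$, then $g\of{z}=ze^{\lambda_0 z}\in V_D$ satisfies $g\of{0}=0$ but $g\of{c}=ce^{\lambda_0 c}\ne 0$ for any $c\ne 0$, forcing $T_D=\br{0}$. If $D=\sum_j \pr{\lambda_j}$ is squarefree, then the constraint $e^{\lambda_j c}=\mu\of{c}$ for all $j$ forces $\pr{\lambda_j-\lambda_1}c\in 2\pi\ii\Z{}$, making $T_D$ a discrete subgroup of $\C{}$ of rank at most one, nonzero only when the $\lambda_j$ lie in an arithmetic progression. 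By the second morphism family of Lemma~\ref{lemma:aut.G.D}, I rescale to the normal form $T_D=\Z{}$, $\lambda_j=\lambda+2\pi\ii k_j$ for coprime positive integers $k_j$, and $\mu\of{k}=e^{\lambda k}$, giving $C=\Z{}\rtimes_\lambda\C{}$ exactly as used in examples $B\beta 1B$--$B\beta 1I$. With $C$ in hand, I enumerate discrete subgroups $\pi\subset C$ by letting $\bar\pi\subset T_D$ be the image and $\pi_0=\pi\cap\C{}$ the kernel. The case $\bar\pi=0$ yields example $B\beta 1A$. For $T_D=\Z{}$ and $\bar\pi=n\Z{}$, I split by $\rk\pi_0\in\br{0,1,2}$ and by whether the extension $1\to\pi_0\to\pi\to\bar\pi\to 1$ is abelian or twisted, in parallel to the analysis of section~\ref{section:D2}. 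This yields examples $B\beta 1B$ through $B\beta 1I$; the nonabelian cases $B\beta 1F$ (a nontrivial $\C{\times}$-bundle over $\C{\times}$) and $B\beta 1I$ (a nontrivial elliptic bundle over $\C{\times}$) arise exactly when $e^{\lambda n}$ is a nontrivial root of unity preserving $\pi_0$.

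Finally, for each case I verify that the explicit biholomorphism $\pr{z,w}\mapsto\pr{Z,W}$ appearing in the corresponding example definition descends to a $G_D$-equivariant isomorphism $\C{2}/\pi\cong X'$. Uniqueness of the parameters $n$, $m$, $s$, $\tau$, and the $k_j$ up to the stated identifications follows by composing with the three morphism families of Lemma~\ref{lemma:aut.G.D} and with the $\PSL{2,\Z{}}$-action on $\tau$, and by noting that the discrete invariants (the divisor $D$ modulo rescaling, the integer $n$, and the cohomology class of the fibration $X'_0\to X'\to\bar{X}'$) are preserved by any isomorphism. The main obstacle will be the bookkeeping for the nonabelian examples $B\beta 1F$ and $B\beta 1I$: verifying that freeness of the $\pi$-action forces $e^{\lambda n}$ to be a root of unity of precisely the orders compatible with the symmetries of $\pi_0$ (paralleling the nonabelian rows of table~\ref{table:discrete.subgroups.aff.c}), and that the resulting bundles match Example~\ref{example:nontrivial.bundle} unambiguously.
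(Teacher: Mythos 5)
Your plan follows essentially the same route as the paper: your $T_D$ and character $\mu$ are precisely the paper's quasiperiod group $Q_D$ and weights $\gamma_\varpi$ (Lemmas~\ref{lemma:Q.D} and~\ref{lemma:quasiperiods.quotients}), the centralizer computation and the normalization $D=[\lambda]+\sum_j[\lambda+2\pi\ii k_j]$ with $Q_D=\Z{}$ match, and the enumeration of discrete $\pi\subset\Z{}\rtimes_\lambda\C{}$ by $\rk\pi_0$ and the value of $e^{\lambda n}$ is exactly the paper's case split onto examples $B\beta1A$--$B\beta1I$. The proposal is correct as a plan, with the same use of the morphism families of Lemma~\ref{lemma:aut.G.D} to normalize parameters.
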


The proof of the above proposition requires several lemmas.

A \emph{quasiperiod} of an effective divisor \(D\) on \(\C{}\) is a complex number \(\varpi \in \C{}\) so that, for every \(f \in V_D\), \(f(0)=0\) just when \(f(\varpi)=0\).
If \(\lambda\) has order more than \(1\) in \(D\), then \(f(z)=e^{\lambda z}z \in V_D\) vanishes only at \(z=0\), so the only quasiperiod is \(\varpi=0\).
Therefore if there is a nonzero quasiperiod of \(D\), then \(D\) has order 0 or 1 at every point of \(\C{}\).
By translation invariance of \(V_D\), the quasiperiods are a subgroup of \(\C{}\).
Let \(Q_D \subset \C{}\) be the abelian subgroup of quasiperiods of \(D\). 
We will see that each quasiperiod \(\varpi \in Q_D\) determines a complex number, its \emph{weight} \(\gamma_{\varpi} \in \C{\times}\),
by
\[
\gamma_{\varpi} = \frac{f\of{\varpi}}{f(0)}
\]
for all \(f \in V_D\) with \(f(0)\ne 0\).
Construct a group \(Q_D \rtimes \C{}\) with the multiplication
\[
\pr{\varpi_0,s_0}
\pr{\varpi_1,s_1}
=
\pr{\varpi_0+\varpi_1,s_0+\gamma_{\varpi_0} s_1}.
\]
The group \(Q_D \rtimes \C{}\) acts on \(\C{2}\) by
\[
\pr{\varpi,s}(z,w)=\pr{\varpi+z,\gamma_{\varpi} w + s},
\]
effectively and freely.
We will see that \(Q_D \rtimes \C{}\) is the group of biholomorphisms of \(\C{2}\) which commute with the action of \(G_D\).
If \(\pi \subset Q_D \rtimes \C{}\) is a discrete subgroup acting freely and properly on  \(X=\C{2}\), then form the quotient \(X'=\C{2}/\pi\) with quotient action of \(G'=G_D/(G_D \cap \pi)\).

\begin{lemma}\label{lemma:Q.D}
Suppose that \(D\) is an effective divisor on \(\C{}\).
After perhaps replacing \(D\) by a rescaling \(\sum n_j \left[\lambda_j\right] \mapsto \sum n_j \left[\mu \lambda_j\right]\) for some \(\mu \in \C{\times}\), either
\begin{enumerate}
\item
\(Q_D=\left\{0\right\}\) or
\item
\(D=\left[\lambda\right]\) and \(Q_D=\C{}\) or
\item
for some \(\lambda \in \C{}\) and relatively prime positive integers \(k_1, k_2, \dots, k_n\),
\[
D = [\lambda] + \sum_j \left[\lambda + 2 \pi \ii k_j\right]
\]
and \(Q_D = \Z{}\) and every \(f \in V_D\) satisfies \(f(z+1)=e^{\lambda} f(z)\).
\end{enumerate}
\end{lemma}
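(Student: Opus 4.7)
My plan is to work explicitly with \(V_D\) as a span of exponential-polynomial functions and compare the zero conditions at \(0\) and at \(\varpi\). The excerpt already notes that if any point \(\lambda\) of \(D\) has multiplicity \(\ge 2\), then \(z e^{\lambda z} \in V_D\) vanishes only at \(z=0\), so \(Q_D=\br{0}\). I may therefore assume \(D=\sum_j \left[\lambda_j\right]\) with the \(\lambda_j\) distinct, so that \(V_D = \operatorname{span}\br{e^{\lambda_j z}}\). For \(f(z)=\sum_j a_j e^{\lambda_j z}\), the equations \(f(0)=0\) and \(f\of{\varpi}=0\) read \(\sum a_j=0\) and \(\sum a_j e^{\lambda_j \varpi}=0\); these two linear forms on the coefficients cut out the same hyperplane precisely when they are proportional, i.e.\ when there is a constant \(c\) with \(e^{\lambda_j \varpi}=c\) for every \(j\). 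This \(c\) is the weight \(\gamma_{\varpi}\), and the same identity immediately gives \(f\of{z+\varpi}=c\,f(z)\) for every \(f \in V_D\), so the proposed formula for the weight is well-defined.

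If \(D=\left[\lambda\right]\) is a single summand the condition is vacuous and \(Q_D=\C{}\), which is case (2). Otherwise fix \(\lambda_1\) and set \(\mu_j=\lambda_j-\lambda_1\) for \(j \ge 2\); dividing the constancy condition through by its value at \(j=1\) gives \(e^{\mu_j \varpi}=1\), equivalently \(\mu_j \varpi \in 2 \pi i \Z{}\), so
\[
Q_D = \bigcap_{j \ge 2} \pr{2\pi i/\mu_j}\Z{}.
\]
Each factor is a rank-one discrete subgroup of \(\C{}\) contained in the real line through \(2\pi i/\mu_j\), and an elementary argument in real directions shows that this intersection collapses to \(\br{0}\) unless all of those lines coincide, in which case the intersection is again a rank-one discrete subgroup; call a generator \(\varpi_0\).

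In that case, rescale \(D\) by \(\mu=\varpi_0\) using the second family of equivariant morphisms from before Lemma~\ref{lemma:aut.G.D}, which sends each \(\lambda_j\) to \(\varpi_0 \lambda_j\) and sends \(Q_D\) to \(Q_D/\varpi_0=\Z{}\). The defining condition \(\varpi_0 \mu_j \in 2\pi i\Z{}\) now reads \(\lambda_j=\lambda_1+2\pi i k_j\) for integers \(k_j\) with \(k_1=0\), and minimality of the generator \(1\) of the new \(Q_D\) forces those integers to have no common factor greater than one (else a smaller positive rational would still satisfy all conditions and lie in the new \(Q_D\)). Replacing \(\lambda_1\) by \(\lambda_1+2\pi i \min_j k_j\) and relabeling, I may arrange every \(k_j\) to be a nonnegative integer with one of them equal to \(0\); renaming \(\lambda := \lambda_1\) and reading off the remaining (positive, coprime) \(k_j\) gives case (3), and \(\gamma_1=e^{\lambda_1 \cdot 1}=e^{\lambda}\) yields the promised identity \(f\of{z+1}=e^{\lambda} f(z)\). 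The one mildly delicate step is the structural claim about intersections of rank-one lattices in \(\C{}\); this is elementary but requires careful bookkeeping of the real directions \(2\pi i/\mu_j\).
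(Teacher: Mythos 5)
Your argument is correct and follows essentially the same route as the paper: reduce to multiplicity-one divisors via \(ze^{\lambda z}\), show that \(\varpi\) is a quasiperiod exactly when \(e^{\lambda_j\varpi}\) is independent of \(j\) (you do this by proportionality of the two evaluation functionals, the paper by computing the roots of two-term sums \(c_1e^{\lambda_1 z}+c_2e^{\lambda_2 z}\) --- the same computation in different clothing), deduce that \(Q_D\) sits inside the rank-one group \(\pr{2\pi i/\mu_2}\Z{}\), rescale so a generator becomes \(1\), and use minimality of the generator to force the \(k_j\) coprime. One small misstatement: it is not true that the intersection \(\bigcap_j\pr{2\pi i/\mu_j}\Z{}\) is rank one whenever the lines coincide (take \(\mu_2=1\), \(\mu_3=\sqrt{2}\)); but this does not affect your proof, since all you need is that a nonzero \(Q_D\), being a subgroup of the discrete rank-one group \(\pr{2\pi i/\mu_2}\Z{}\), is itself discrete of rank one --- the "delicate bookkeeping" you worry about at the end is unnecessary.
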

\begin{proof}
Suppose that \(Q_D \ne \left\{0\right\}\).
Because \(V_D\) is invariant under translation, for any \(z \in \C{}\) and \(f \in V_D\), \(f(z+\varpi)=0\) just when \(f(z)=0\).
If \(D\) has degree 1, i.e. \(D=\left[\lambda\right]\), then every \(f \in V_D\) has the form \(f(z)= c \, e^{\lambda z}\), so \(f(z)=0\) for some \(z\) just when \(c=0\), so just when \(f(z)=0\) for all \(z\), so \(Q_D=\C{}\).

If \(D\) has degree \(n\ge 2\) at some point \(\lambda \in \C{}\), then \(V_D\) contains the function \(f(z)=e^{\lambda z}z\) so there are no quasiperiods.
Therefore quasiperiods exist only when \(D\) has multiplicity zero or one at any point of \(\C{}\), say \(D=\sum_j \left[\lambda_j\right]\), with distinct \(\lambda_1, \lambda_2, \dots, \lambda_n\), so that the functions \(f \in V_D\) are the functions 
\[
f(z) = \sum_j c_j e^{\lambda_j z}.
\]
In particular, if we choose all but two of the coefficients to vanish, say
\[
f(z) = c_1 e^{\lambda_1 z} + c_2 e^{\lambda_2 z} \in V_D,
\]
then the roots of \(f(z)\) lie at the points
\[
z = \frac{\log\pr{-\frac{c_2}{c_1}}}{\lambda_1 - \lambda_2}.
\]
The quasiperiods lie among the differences between the roots:
\[
Q_D \subset 
\frac{2 \pi \ii}{\lambda_1 - \lambda_2} \Z{}
\]
and in particular \(Q_D\) is generated by a single element, say 
\[
\varpi_0=\frac{2 \pi \ii}{\lambda_1 - \lambda_2} k_{12}
\]
for some integer \(k_{12}\).
By the same argument,
\[
\varpi_0=\frac{2 \pi \ii}{\lambda_a - \lambda_b} k_{ab}
\]
for some integer \(k_{ab}\), so that
\[
\lambda_a = \lambda_1 + \frac{2 \pi \ii}{\varpi_0} k_{a1}.
\]
Rescale to arrange that \(\varpi_0=1\), and let \(\lambda=\lambda_1\) so that \(\lambda_j = \lambda + 2 \pi \ii k_j\) for distinct integers \(k_j\).
Since now all of the \(\lambda_j\) lie on a vertical line in the complex plane, we can pick \(\lambda_1\) (by reordering these \(\lambda_j\)) to arrange that all \(k_j\) are positive.
If \(m\) is the greatest common divisor of the various \(k_j\), then \(1/m\) is a quasiperiod, so the \(k_j\) are relatively prime.
\end{proof}

\begin{lemma}\label{lemma:quasiperiods.quotients}
Every complex homogeneous surface  whose germ at a point is isomorphic to the germ of the action of \(G_D\) on \(\C{2}\) is a quotient by discrete subgroup \(\pi \subset Q_D \rtimes \C{}\).
\end{lemma}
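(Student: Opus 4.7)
The plan is to reduce the problem to the computation of the centralizer of the $G_D$-action inside $\Bihol{\C{2}}$ and then to determine that centralizer explicitly.

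First I would invoke the framework of Section~2: because $\C{2}$ is simply connected and $G_D=\C{}\ltimes V_D$ is simply connected, $\pr{\C{2},G_D}$ is the unique simply connected complex homogeneous surface realizing the given Lie algebra action germ. Hence every $\pr{X',G'}$ having that same germ has the form $X'=\C{2}/\pi$, $G'=G_D/\pr{G_D\cap\pi}$, for some discrete group $\pi$ of biholomorphisms of $\C{2}$ whose elements all commute with every element of $G_D$. So it is enough to show that the centralizer of the $G_D$-action inside $\Bihol{\C{2}}$ is exactly the image of $Q_D\rtimes\C{}$ under its natural action on $\C{2}$.

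Next I would take an arbitrary biholomorphism $\phi(z,w)=\pr{Z(z,w),W(z,w)}$ commuting with the full $G_D$-action and extract constraints from two natural one-parameter subgroups of $G_D$. Commutation with the translations $(t,0)\cdot(z,w)=\pr{z+t,w}$ forces $Z(z,w)=z+Z_0(w)$ and $W(z,w)=W_0(w)$ to depend on $w$ alone. Commutation with $(0,f)\cdot(z,w)=\pr{z,w+f(z)}$ for $f\in V_D$ then yields the two identities
\begin{align*}
Z_0\of{w+f(z)}&=Z_0(w),\\
W_0\of{w+f(z)}&=W_0(w)+f\of{z+Z_0(w)}.
\end{align*}
Since the $G_D$-action on $\C{2}$ is transitive, the evaluation $V_D\to\C{}$, $f\mapsto f(z)$, is surjective at every $z$, so the increments $f(z)$ exhaust $\C{}$. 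The first identity then forces $Z_0$ to be constant, say $Z_0\equiv\varpi$, and differentiating the second in $w$ shows that $W_0'$ is constant, so $W_0(w)=\gamma w+s$ for some $\gamma,s\in\C{}$ with $\gamma\ne 0$ (by invertibility of $\phi$).

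Feeding these forms back in yields the clean identity $\gamma f(z)=f\of{z+\varpi}$ for every $f\in V_D$ and $z\in\C{}$, which is precisely the statement that $\varpi\in Q_D$ is a quasiperiod with weight $\gamma_\varpi=\gamma$. Thus $\phi$ is the natural action of $\pr{\varpi,s}\in Q_D\rtimes\C{}$ on $\C{2}$, and conversely each such element commutes with $G_D$ by the very construction of $Q_D\rtimes\C{}$. I expect the one mildly delicate step to be the transitivity argument that pins down $Z_0$ as a constant and $W_0$ as an affine function; once that reduction is carried out, the identification with $Q_D\rtimes\C{}$ is immediate from the defining property of a quasiperiod.
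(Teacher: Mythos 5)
Your proposal is correct and follows essentially the same route as the paper: extract the functional equations from commutation with the $G_D$-action, use that $f\mapsto f(z)$ is surjective on $V_D$ to force $Z(z,w)=z+\varpi$ and $W(z,w)=\gamma w+s$, and read off the quasiperiod identity $\gamma f(z)=f\of{z+\varpi}$. The only cosmetic difference is that you split the computation over the two subgroups $(t,0)$ and $(0,f)$ where the paper works with a general element $(t,f)$ and then specializes.
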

\begin{proof}
Suppose that \(\delta \colon \C{2} \to \C{2}\) is a biholomorphism commuting with all elements of \(G_D\).
Write \(\delta\) as \(\pr{z,w} \mapsto \pr{Z,W}\), so that commuting with \(\pr{t,f} \in G_D\) is precisely
\begin{align*}
Z\of{z+t,w+f(z+t)} &= Z(z,w)+t, \\
W\of{z+t,w+f(z+t)} &= W(z,w)+f\of{Z(z,w)+t}.
\end{align*}
We can choose \(f\) to take on any value at any point.
So our first equation tells us that \(Z\of{z,w}\) cannot depend on \(w\), say \(Z(z+t)=Z(z)+t\), and setting \(z=0\) gives \(Z\of{z}=z+\varpi\) for some constant \(\varpi \in \C{}\).
Our second equation then says \(W(z+t,w+f(z+t))=W(z,w)+f\of{z+t+\varpi}\).
Take \(f=0\) to see that \(W(z,w)\) is independent of \(z\), say \(W(z,w)=W(w)\).
Differentiate in \(w\) to find that \(W'(w)\) is constant so that \(W(w)=s+\gamma w\), say, for some constants \(s, \gamma\).
Plug back in to find \(\gamma f(z) = f\of{z+\varpi}\) for all \(z\).
In particular, \(f(0)=0\) just when \(f\of{\varpi}=0\), for all \(f \in V_D\), so \(\varpi\) is a quasiperiod and \(\gamma=\gamma_{\varpi}\).
\end{proof}

We need to classify discrete subgroups \(\pi \subset Q_D \rtimes \C{}\), up to equivalence, i.e. up to conjugacy \(\delta \pi \delta^{-1}\) by automorphisms \pr{\delta,h} of \(\pr{X,G}=\pr{\C{2},G_D}\) fixing \(0 \in \C{2}\).
Lemma~\vref{lemma:aut.G.D} classifies the automorphisms.
Under the automorphism \(\delta(z,w)=\pr{z,\nu w}\), for some \(\nu \in \C{\times}\):
\(\pr{\varpi,s} \mapsto \pr{\varpi,\nu s}\).
Under the automorphism \(\delta(z,w)=\pr{z,w+f(z)}\) for some \(f \in V_{D+[0]}\): \(\pr{\varpi,s} \mapsto \pr{\varpi, s+f\of{\varpi}-\gamma_{\varpi} f(0)}\).
Write out the functions \(f \in V_{D+[0]}\) to see table~\vref{table:automorphisms}.

\begin{table}
\caption{The action of automorphisms of \(\pr{X,G}=\pr{\C{2},G_D}\) on \(Q_D \rtimes \C{}\). Here \(\left\{k_j\right\}\) is any finite set of relatively prime nonzero integers, \(\lambda \notin 2\pi \ii \Z{}\), \(\nu \in \C{\times}\) is arbitrary, and \(t \in \C{}\) is arbitrary.}%
\label{table:automorphisms}
\begin{tabular}{@{}AAA@{}}
\toprule
D & \text{Action} \\
\midrule
%\endfirsthead
%\toprule
%D & \text{Action} \\
%\midrule
%\endhead
%\bottomrule
%\endfoot
%\bottomrule
%\endlastfoot
\sum\left[\lambda + 2 \pi \ii k_j\right] & \pr{k,s} \mapsto \pr{k,\nu s + t \pr{1-e^{\lambda k}}}\\
{[0]}+\sum\left[2 \pi \ii k_j\right] & \pr{k,s} \mapsto \pr{k,\nu s + tk}\\
\text{otherwise} & \pr{k,s} \mapsto \pr{k,\nu s} \\
\bottomrule 
\end{tabular}
\end{table}

We prove proposition~\vref{proposition:quotients.B.beta.1}.
\begin{proof}
For each divisor \(D\) we only have to check that the discrete subgroups \(\pi \subset Q_D \rtimes \C{}\) are precisely those occuring in our examples.
Suppose that \(\pi \subset \left\{0\right\} \rtimes \C{} \subset Q_D \rtimes \C{}\).
Then \(\pi\) is a discrete group of translations on \(\C{}\).
If \(D\) has positive degree at \(0\) then \(\pi \cap G=\pi\), so the induced effective group action is \(G'=G/\pi\) acting on itself by translations: \(B\beta{1}A0\)
If \(D\) has zero degree at \(0\), then \(G \cap \pi=\left\{0\right\}\), and the quotient group is \(G'=G_D\): \(B\beta{1}A1\).

So we can assume that \(\pi \subset Q_D \rtimes \C{}\) is not contained in \(\left\{0\right\} \rtimes \C{}\).
By lemma~\vref{lemma:Q.D}, we can assume that \(Q_D=\Z{}\) and that 
\[
D = [\lambda] + \sum_j \left[\lambda + 2 \pi \ii k_j\right]
\]
with relatively prime positive integers \(k_j\) and every \(f \in V_D\) satisfies \(f(z+1)=e^{\lambda} f(z)\).
The morphism \(Q_D \rtimes \C{} \to Q_D=\Z{}\) has image on \(\pi\) equal to \(n \Z{}\) for a unique integer \(n > 0\), so \(\pi\) contains an element \(\pr{n,s}\) for some \(s \in \C{}\).
The action of \(Q_D \rtimes \C{}\) on \(X=\C{2}\) is \((n,s) \in Q_D \rtimes \C{}, (z,w) \in \C{2} \mapsto \pr{z+n,e^{\lambda n}w+s}\); every nonidentity element \((n,s)\) acts without fixed points.
The exact sequence \(1 \to \pi_0 \to \pi \to n \Z{} \to 1\) defines a discrete subgroup \(\pi_0 \subset \C{}\).
Conjugation invariance of \(\pi\) under \(\pr{n,s}\) forces \(e^{\lambda n} \pi_0 =\pi_0\).
Under automorphisms of \(\pr{X,G}=\pr{\C{2},G_D}\), rescale \(Q_D \rtimes \C{}\) by \(\pr{n,s}\mapsto\pr{n,\nu s}\), so scale \(\pi_0 \subset \C{}\) by any \(\nu \in \C{\times}\).

If \(\pi_0=\left\{0\right\}\) then \(\pi\) is freely generated by a unique element \(\pr{n,s}\) with \(n > 0\).
If \(e^{\lambda n} \ne 1\) or if \(\lambda=0\) then use an automorphism of \(G\) from table~\vref{table:automorphisms} to arrange that \(s=0\): \(B\beta{1}B\).

If, after rescaling, \(\pi=\left<\pr{n,1}\right>\) with \(n > 0\) and \(e^{\lambda n}=1\), then say \(\lambda=2 \pi i m/n\) with \(m \in \Z{\times}\): \(B\beta{1}C\).

If \(\pi_0\) is generated by a single element, rescale to assume that that element is \(1\). 
To ensure \(e^{\lambda n}\pi_0=\pi_0\) we have \(e^{\lambda n}=\pm 1\), i.e. \(\lambda = \pi \ii m/n\) for some integer \(m\) and \(\pi=\left<\pr{n,s}, \pr{0,1}\right>\).
If \(\lambda=0\) then we have \(B\beta1D\).
Otherwise, if \(e^{\lambda n}=1\) then we have \(B\beta1E\).
If \(e^{\lambda n}=-1\), arrange by automorphism that \(s=0\), i.e. \(\pi=\left<\pr{n,0}, \pr{0,1}\right>\): \(B\beta1F\).

Suppose that \(\pi_0\) is generated by two elements, i.e. \(\pi_0 \subset \C{}\) is a lattice.
If \(\lambda=0\), then \(B\beta1G\), and if not but \(e^{\lambda n}=1\), then \(B\beta1H\).
So suppose that \(e^{\lambda n} \ne 1\); arrange by automorphism that \(s=0\) to find \(B\beta1I\).
\end{proof}

\section{%
\texorpdfstring%
{$B\beta2$: constant coefficient linear ODE with rescaling}
{Bbeta2: constant coefficient linear ODE with rescaling}
}%

\subsection{Definition}

Let \(\rG{D} = \C{} \times \C{\times} \times V_D\) with group operation
\[
\pr{t_0,\lambda_0,f_0(z)}
\pr{t_1,\lambda_1,f_1(z)}
=
\pr{t_0+t_1,\lambda_0 \lambda_1, 
f_0(z) + \lambda_0 \, f_1\of{z-t_0}}
\]
and action on \(\C{2}\)
\[
\pr{t,\lambda,f}(z,w)
=
\pr{z + t, \lambda w + f\of{z + t}}.
\]
If \(\deg D=1\), Lie denotes \(\rG{D}\) acting on \(\C{2}\) by the name \(C2\), and it is easy to see that the action is isomorphic to \(C2\) as described in section~\vref{section:C2}; henceforth we assume \(\deg D \ge 2\) and then Lie denotes \(\rG{D}\) acting on \(\C{2}\) by the name \(B\beta{2}\).

\subsection{Equivalent actions}

\begin{remark}\label{remark:GDprime.morphisms}
Consider three different families of morphisms.
\begin{enumerate}
\item
If \(g \in \rG{D}\), \(\delta(z,w)=g(z,w)\) is equivariant under the isomorphism \(h=\Ad{g} \colon \rG{D} \to \rG{D}\).
\item
Write an effective divisor on \(\C{}\) as \(D=\sum_j n_j \left[\lambda_j\right]\), where \(n_j \in \Z{}_{> 0}\) is a multiplicity and \(\lambda_j \in \C{}\) is a point.
Pick any nonzero complex numbers \(\mu, \nu\) and let \(E=\sum_j n_j \left[\mu \lambda_j\right]\).
Then \(\delta \colon \pr{z,w}\mapsto\pr{z/\mu,\nu w}\) is equivariant under
\[
h \colon (t,\lambda,f) \in \rG{D} \mapsto \pr{\frac{t}{\mu}, \lambda, \nu f\of{\mu z}} \in \rG{E}
\]
for any constants \(\mu, \nu \in \C{\times}\).
\item
Pick \(a \in \C{}\).
Let \(F=\sum_j n_j \left[\lambda_j+a\right]\).
The map \(\delta \colon \pr{z,w}\mapsto\pr{z,e^{az} w}\) is equivariant under the group morphism
\[
h \colon (t,\lambda,f) \in \rG{D} \mapsto \pr{t, e^{at} \lambda, e^{az} f(z)} \in \rG{F}.
\]
\end{enumerate}
Composing these isomorphisms together, we can carry out any complex affine transformation of \(D\).
\end{remark}

\begin{lemma}\label{lemma:aut.rGD}
Pick effective divisors \(D\) and \(E\) on \(\C{}\).
If a biholomorphism \(\delta \colon \C{2} \to \C{2}\) is equivariant for a group morphism \(h \colon \rG{D} \to \rG{E}\), then modulo a composition of the morphisms defined in remark~\ref{remark:GDprime.morphisms}, \(\delta=\id\) and \(E-D\) is effective or zero and \(h\) is the inclusion morphism.
The isomorphism classes of complex homogeneous spaces \(\pr{\C{2},\rG{D}}\) are parameterized by the isomorphism classes of effective divisors on \(\C{}\) modulo complex affine transformation.
\end{lemma}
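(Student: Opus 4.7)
The plan mirrors the proof of Lemma~\ref{lemma:aut.G.D}, with extra care for the $\C{\times}$-factor of $\rG{D}$ that rescales $w$. Write $\delta(z,w) = (Z(z,w), W(z,w))$ and $h(t,\lambda,f) = (T,\Lambda,F_{t,\lambda,f})$. I would first apply an inner automorphism from the first family to arrange $\delta(0,0)=(0,0)$. Evaluating equivariance with $\lambda=1$ and $f=0$ gives $Z(z+t,w) = Z(z,w) + T(t,1,0)$; additivity forces $T(t,1,0)=ct$, and the second family allows me to normalize $c=1$, so $Z(z,w)=z+Z(0,w)$. Evaluating equivariance at $t=0=f$ with variable $\lambda$ yields $Z(0,\lambda w)-Z(0,w) = T(0,\lambda,0)$; differentiating in $\lambda$ at $w=0$ shows the right-hand side is independent of $\lambda$, hence zero by $T(0,1,0)=0$, forcing $Z(z,w)=z$ and $T(t,\lambda,f)=t$ identically.

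The second component of equivariance then reduces to
\[
W(z+t,\lambda w + f(z+t)) \;=\; \Lambda(t,\lambda,f)\,W(z,w) + F_{t,\lambda,f}(z+t).
\]
Since $Z(z,w)=z$, biholomorphicity of $\delta$ forces $W_w$ to be nowhere zero; differentiating at $t=f=0$ in $w$ and evaluating at $w=0$ gives $\Lambda(0,\lambda,0)=\lambda$. Next, $\Lambda(t,1,0)$ is an entire group homomorphism $\C{}\to\C{\times}$, hence $\Lambda(t,1,0)=e^{at}$ for some $a\in\C{}$. I would absorb this by composing with a third-family morphism with parameter $-a$, which carries $D$ to $\sum n_j[\lambda_j-a]$ and trivializes the exponential factor, leaving $\Lambda(t,\lambda,f)=\lambda$ identically. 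Differentiating the $W$-equation in $w$ once more and invoking transitivity then forces $W_w$ to be a nonzero constant $\nu$, so $W(z,w)=\nu w + f_0(z)$ with $f_0(0)=0$.

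Substituting back yields the identity
\[
F_{t,\lambda,f}(z) \;=\; \nu f(z) + f_0(z) - \lambda f_0(z-t).
\]
Taking $(t,\lambda)=(0,1)$ gives $\nu V_D\subset V_E$, so $V_D\subset V_E$ and $E-D$ is effective or zero; taking $f=t=0$ and any $\lambda\neq 1$ gives $f_0\in V_E$. I would finish by post-composing $\delta$ with the inner automorphism of $\rG{E}$ by $(0,1,-f_0)\in\rG{E}$, which is legitimate because $f_0\in V_E$, to kill the $f_0$-term; a second-family morphism with $\mu=1$ and scale $1/\nu$ then absorbs the remaining $\nu$, leaving $\delta=\id$ and $h$ equal to the tautological inclusion $\rG{D}\hookrightarrow\rG{E}$.

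For the parameterization statement, I would apply the main claim to an isomorphism $\pr{\delta,h}\colon\pr{\C{2},\rG{D}}\to\pr{\C{2},\rG{E}}$ and to its inverse: the two inclusions $V_D\subset V_E$ and $V_E\subset V_D$ yield $V_D=V_E$ (after applying the family modifications), hence $D=E$ up to the transformations performed, which compose to an arbitrary complex affine transformation of the divisor. The main technical obstacle I anticipate is the coordinated bookkeeping of how $D$ changes each time a second- or third-family morphism is applied — in particular, verifying that the exponential factor $e^{at}$ in $\Lambda$ can be absorbed consistently with the simultaneous translation of $D$ by $-a$, and that the statement $E-D$ effective genuinely holds for the intermediate divisor at the end of the reduction rather than for the original one.
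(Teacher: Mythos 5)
Your proposal is correct and follows essentially the same route as the paper: normalize $\delta(0,0)=(0,0)$ with the first family, force $Z=z$ and $T=t$, use the power-series/derivative structure in $w$ to pin down $\Lambda$ and $W$, and read off $V_D\subset V_E$ from the resulting formula for $F_{t,\lambda,f}$. Your treatment is in fact slightly more careful than the paper's at one point: you keep track of the additive term $f_0(z)$ in $W$, verify $f_0\in V_E$, and absorb it by an inner automorphism, whereas the paper's expansion argument jumps directly to $W(z,w)=W(z)w^k$.
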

\begin{proof}
Suppose that \(\delta \colon \C{2} \to \C{2}\) is a biholomorphism and \(h \colon \rG{D} \to \rG{E}\) is a Lie group homomorphism so that \(\delta \circ g = h(g) \circ \delta\) for any \(g \in \rG{D}\).
If \(h(g)=I\), then \(\delta \circ g=\delta\) so \(g=I\) and \(h\) is 1-1.
For any \(g \in \rG{E}\), we can replace \(\delta\) by \(g \delta\) and \(h\) by \(\Ad(g)h\).
Since \(\rG{E}\) acts transitively, we can assume without loss of generality that \(\delta(0,0)=(0,0)\).
Denote \(\delta(z,w)\) by \(\delta(z,w)=\pr{Z(z,w),W(z,w)}\) and \(h(t,\lambda,f)\) by \(h(t,\lambda,f)=\pr{T,\Lambda,F}\).
Equivariance under \(h\) says precisely that
\begin{align*}
Z\of{z+t,\lambda w + f(z+t)}
&=
Z\of{z,w}+T(t,\lambda,f), \\
W\of{z+t,\lambda w + f(z+t)}
&=
\Lambda(t,\lambda,f)W\of{z,w}+F_{t,\lambda,f}\of{Z(z,w)+T(t,\lambda,f)},
\end{align*}
for all \(\pr{t,\lambda,f}\in \rG{D}\) and all \(z,w \in \C{2}\).
In particular, if \(f=0\), we find
\begin{align*}
Z\of{z+t,\lambda w}
&=
Z\of{z,w}+T(t,\lambda,0), \\
W\of{z+t,\lambda w}
&=
\Lambda(t,\lambda,0)W\of{z,w}+F_{t,\lambda,0}\pr{Z(z,w)+T(t,\lambda,0)}.
\end{align*}
The Taylor series of the left hand side of the first equation depends on \(\lambda, w\) only through powers of \(\lambda w\), while the right hand side depends only on powers of \(\lambda\) and, separately, powers of \(w\), so both sides are independent of \(\lambda\) and \(w\): \(Z(z+t)=Z(z)+T(t,0,0)\).
So \(Z(z)=Z(0)+T(z)=T(z)\), and so \(Z\) is additive, and continuous, so \(Z(z)=c_1 z\) for some constant \(c_1 \in \C{\times}\) and \(T(t,\lambda,0)=c_1 t\).
Conjugate with the second type of morphism above to assume \(c_1=1\).
Plug back in to get \(T(t,\lambda,f)=t\).

Our equations are now
\[
W\of{z+t,\lambda w + f(z+t)}
=
\Lambda(t,\lambda,f)W\of{z,w}+F_{t,\lambda,f}\of{z+t}
\]
for all \(\pr{t,\lambda,f}\in \rG{D}\) and all \(z,w \in \C{2}\).
Again, expand the left hand side in \(\lambda w\), and then the right hand side can only involve \(\lambda\) terms and \(w\) terms in the same order, so \(\Lambda(t,\lambda,f)=\Lambda(t,f)\lambda^k\) and \(W(z,w)=W(z)w^k\) for some integer \(k\).
To have \(\delta\) a biholomorphism, \(k=1\), so \(\Lambda(t,\lambda,f)=\Lambda(t,f)\lambda\) and \(W(z,w)=W(z)w\).
Plug back in to get \(W(z+t)=\Lambda(t,f)W(z)\) and \(W(z)f(z)=F_{t,f}(z)\).
Clearly \(\Lambda(t,f)=\Lambda(t)\) is independent of \(f\), and a group homomorphism so \(\Lambda(t)=e^{at}\) and \(W(z)=ce^{at}\).
Using our second and third family of isomorphisms, arrange \(\Lambda(t)=1\) and \(W(z)=1\) and we find \(\delta=\text{id}\) and \(h=\text{id}\).
\end{proof}

\subsection{Quotient spaces}

\begin{lemma}
Suppose that \(D\) is an effective divisor on \(\C{}\).
Define an action \(\varpi \in Q_D, \pr{z,w} \in \C{2} \mapsto \pr{z+\varpi,\gamma_{\varpi} w} \in \C{2}\).
The biholomorphisms of \(X=\C{2}\) which commute with \(\rG{D}\) are precisely those arising in this action.
\end{lemma}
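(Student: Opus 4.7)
The plan is to imitate the proof of Lemma~\ref{lemma:quasiperiods.quotients}, but exploit the extra parameter $\lambda \in \C{\times}$ to simplify things. Write the biholomorphism as $\delta(z,w) = \pr{Z(z,w),W(z,w)}$; commutation with $(t,\lambda,f) \in \rG{D}$ reads
\begin{align*}
Z\of{z+t,\lambda w + f(z+t)} &= Z(z,w) + t, \\
W\of{z+t,\lambda w + f(z+t)} &= \lambda W(z,w) + f\of{Z(z,w) + t}.
\end{align*}

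First I would set $f = 0$. The first equation then becomes $Z(z+t,\lambda w) = Z(z,w) + t$. Since the right-hand side is independent of $\lambda$, the left-hand side is independent of $\lambda w$ as $\lambda$ varies; so $Z$ does not depend on its second argument, and $Z(z+t) = Z(z) + t$ forces $Z(z) = z + \varpi$ with $\varpi := Z(0)$. The second equation with $f = 0$ reads $W(z+t,\lambda w) = \lambda W(z,w)$; setting $z = 0$, $w = 1$ gives $W(t,\lambda) = c \lambda$ with $c := W(0,1)$. By transitivity of $\pr{t,\lambda} \in \C{} \times \C{\times}$ on $\C{} \times \C{\times}$, this determines $W(z,w) = c w$ on $\C{} \times \C{\times}$, and hence on all of $\C{2}$ by continuity. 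Since $\delta$ is a biholomorphism, $c \ne 0$.

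Now plug $\delta(z,w) = \pr{z + \varpi,c w}$ back into the second commutation equation with arbitrary $\pr{t,\lambda,f}$; the $\lambda w$ terms match automatically, leaving
\[
c\, f\of{z + t} = f\of{z + t + \varpi}
\]
for all $z,t \in \C{}$ and all $f \in V_D$. Equivalently, $f(u + \varpi) = c\, f(u)$ for every $u \in \C{}$ and every $f \in V_D$. Since $c \ne 0$, this says $f(u) = 0$ if and only if $f(u + \varpi) = 0$; setting $u = 0$ shows that $\varpi$ is a quasiperiod of $D$, and $c = f(\varpi)/f(0) = \gamma_{\varpi}$ for any $f \in V_D$ with $f(0) \ne 0$ (such $f$ exist since $V_D \ne 0$ and $V_D$ is translation-invariant). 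Conversely, it is immediate from the defining identity $f(u + \varpi) = \gamma_{\varpi} f(u)$ that each map $\pr{z,w} \mapsto \pr{z + \varpi, \gamma_{\varpi} w}$ with $\varpi \in Q_D$ does commute with every element of $\rG{D}$, completing the characterisation.

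No step here is a serious obstacle; the only mild subtlety is recognising that the additional freedom in $\lambda$ lets one conclude $Z$ and $W$ have the simple linear form without first having to normalise $\delta$ at the origin as in Lemma~\vref{lemma:aut.rGD}. Once the form $\pr{z + \varpi, c w}$ is in hand, the functional equation $f(u + \varpi) = c f(u)$ immediately gives both membership $\varpi \in Q_D$ and the identification $c = \gamma_{\varpi}$.
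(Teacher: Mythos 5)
Your proof is correct, but it takes a different route from the paper's. The paper's argument is two lines long: since \(G_D \subset \rG{D}\), any biholomorphism commuting with \(\rG{D}\) in particular commutes with \(G_D\), so by lemma~\vref{lemma:quasiperiods.quotients} it already has the form \(\pr{z,w}\mapsto\pr{z+\varpi,\gamma_{\varpi}w+s}\) for some quasiperiod \(\varpi\); then equivariance under the rescalings \(\pr{0,\lambda,0}\) forces \(s=0\). You instead redo the whole computation from scratch, never invoking the earlier lemma, and exploit the rescaling parameter \(\lambda\) from the outset to pin down \(Z(z,w)=z+\varpi\) and \(W(z,w)=cw\) before ever confronting a general \(f\) — which, as you note, also spares you the normalisation \(\delta(0,0)=(0,0)\). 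Both arguments are sound; the paper's is shorter because it reuses work already done for \(G_D\), while yours is self-contained and arguably cleaner as a standalone derivation. Two small remarks: your functional equation \(f(u+\varpi)=c\,f(u)\) is exactly the identity the paper derives in lemma~\vref{lemma:quasiperiods.quotients}, so the content is ultimately the same; and in the converse direction, the identity \(f(u+\varpi)=\gamma_{\varpi}f(u)\) for \emph{all} \(u\) is not literally the paper's definition of the weight (which only fixes the ratio at \(u=0\)) — it follows from the quasiperiod condition together with translation-invariance of \(V_D\) (the evaluation functionals \(\operatorname{ev}_{u}\) and \(\operatorname{ev}_{u+\varpi}\) have equal kernels, hence are proportional, and the proportionality constant is independent of \(u\)), a point the paper also leaves implicit.
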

\begin{proof}
Because \(G_D \subset \rG{D}\), from lemma~\vref{lemma:quasiperiods.quotients} we know that any biholomorphism \(\pr{Z(z,w),W(z,w)}\) is \(Z(z,w)=z+\varpi, W(z,w)=\gamma_{\varpi}w+s\) for some quasiperiod \(\varpi\) of \(D\).
Equivariance under rescaling of \(w\) forces \(s=0\).
\end{proof}

Suppose that \(D\) is an effective divisor on \(\C{}\) of degree \(\ge 2\).
By the classification of quasiperiods in lemma~\vref{lemma:Q.D}, either \(Q_D=0\) or arrange that \(D=[0]+\sum_j \left[2 \pi i k_j\right]\) for relatively prime positive integers \(k_j\), and then \(Q_D=\Z{}\) and \(\gamma_{\varpi} = 1\) for all \(\varpi \in Q_D\).
The discrete subgroups \(\pi \subset Q_D\) are precisely the groups \(\pi=n\Z{}\), each for a unique nonnegative integer \(n \in \Z{}_{\ge 0}\).
The quotient spaces are therefore \(X'=\C{\times} \times \C{}\) with action of \(G'=\rG{D}/\left<\pr{n,1,0}\right>\).

\section{%
\texorpdfstring%
{$B\delta{3}, B\delta{4}$: the total space of a holomorphic line bundle on the projective line}%
{Bd3,Bd4: the total space of a holomorphic line bundle on the projective line}%
}\label{subsubsection:1.11}

\subsection{Definition}

Pick an integer \(n \ge 1\).
The surface \(X=\OO{n}\) is the total space of the usual line bundle \(\OO{n}=\OO{1}^{\otimes n} \to \Proj{1}\).
Each element of \(\OO{n}\) is a pair \((L,q)\) where \(L \subset \C{2}\) is a complex line through \(0\) and \(q \colon L \to \C{}\) is a homogeneous polynomial of degree \(n\).
The surface \(\OO{n}\) is acted on by the group \(\GL{2,\C{}}\) of linear substitutions of variables of \(\C{2}\), and is also acted on by the group \(\Sym{n}{\C{2}}^*\) by adding a globally defined homogeneous polynomial to the polynomial on any given line. 
The subgroup  \(\Z{}_n \subset \GL{2,\C{}}\) of scalings of variables by \(n\)-th roots of unity acts trivially. 
Consequently the group 
\[
G=\OnGroup
\]
acts on the surface \(X=\OO{n}\) with group operation
\[
\pr{g_0,p_0}\pr{g_1,p_1}
=
\pr{g_0g_1, p_0 + p_1 \circ g_0^{-1}}.
\]
Lie denotes the action of \(G\) on \(X\) by \(B\delta{4}\).

\subsection{The biholomorphism group}

\begin{lemma}\label{lemma:BiholomorphismGroupOn}
For any integer \(n \ge 0\),
\[
\Bihol{\OO{n}}
=
\begin{cases}
\Aff{\C{}} \rtimes \Hol{\C{}}{\PSL{2,\C{}}}, & n=0, \\
\OnGroup, & n>0
\end{cases}.
\]
\end{lemma}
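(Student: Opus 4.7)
The plan splits according to whether $n=0$ or $n\ge 1$.

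For $n=0$, I would first observe that the fibers $\Proj{1}\times\{w\}$ of the second projection $q\colon\OO{0}=\Proj{1}\times\C{}\to\C{}$ are the unique maximal connected positive-dimensional compact analytic subvarieties of $\OO{0}$ (any compact subset of $\OO{0}$ projects to a finite subset of $\C{}$). Hence every biholomorphism preserves $q$ and so has the form $(z,w)\mapsto\pr{\Phi(z,w),\Psi(w)}$ with $\Psi\in\Bihol{\C{}}=\Aff{\C{}}$ and $\Phi(\cdot,w)\in\Bihol{\Proj{1}}=\PSL{2,\C{}}$ varying holomorphically in $w$; this is precisely $\Aff{\C{}}\rtimes\Hol{\C{}}{\PSL{2,\C{}}}$.

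For $n\ge 1$, the strategy is to prove that the projection $\pi\colon\OO{n}\to\Proj{1}$ is intrinsic, and then analyze fiber-preserving biholomorphisms. First I would compactify: embed $\OO{n}$ into the Hirzebruch surface $F_n=\mathbb{P}\pr{\OO{n}\oplus\OO{}}$ as the complement of its unique section $Z_-$ of self-intersection $-n$, so that $\operatorname{Pic}\of{F_n}=\mathbb{Z}\cdot Z_+\oplus\mathbb{Z}\cdot F$ with $Z_+^2=n$, $Z_-^2=-n$, $Z_+\cdot Z_-=0$, $Z_\pm\cdot F=1$, $F^2=0$. A compact irreducible analytic curve $C\subset\OO{n}$ is disjoint from $Z_-$, so $C\cdot Z_-=0$; writing $C\sim aZ_++bF$ then forces $b=0$ and $a\ge 1$, hence $C^2=a^2n\ge n$, with equality exactly for $a=1$, i.e.\ exactly when $C$ is a section of $\pi$. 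Thus the sections of $\pi$ are intrinsically characterized as the compact irreducible analytic curves of minimal self-intersection, and this family is preserved by every biholomorphism.

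Next I would recover $\pi$ itself from this family. For $n\ge 1$, two distinct points $p,q\in\OO{n}$ lie in a common fiber of $\pi$ if and only if no section contains both: if $\pi(p)\ne\pi(q)$ then evaluation $\Sym{n}{\C{2}}^*\to\C{2}$ at these two distinct base points is surjective, so some section passes through both; and if $\pi(p)=\pi(q)$ with $p\ne q$ then no section, being a function on the base, can take two values over one point. Consequently every biholomorphism descends to a biholomorphism of $\Proj{1}$.

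Then I would lift the induced base automorphism (an element of $\PSL{2,\C{}}$) to $\GL{2,\C{}}/\Z{}_n$ acting on $\OO{n}$ and compose to reduce to a biholomorphism $\phi$ covering $\id_{\Proj{1}}$. Such a $\phi$ acts on each fiber $\OO{n}_x\cong\C{}$ by an affine map $v\mapsto\lambda(x)v+s(x)$. Holomorphicity of $\lambda\colon\Proj{1}\to\C{}$ on the compact base forces $\lambda$ to be a nonzero constant, and $s\in H^0\pr{\Proj{1},\OO{n}}=\Sym{n}{\C{2}}^*$. The constant $\lambda$ arises from the scalar-matrix center of $\GL{2,\C{}}/\Z{}_n$, so $\phi\in\OnGroup$.

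The main obstacle is this first step: cleanly passing to $F_n$, pinning down sections as the unique minimal compact curves via intersection theory in $F_n$, and verifying the two-point evaluation surjectivity that lets $\pi$ be reconstructed combinatorially from its sections. Once these are in place, the remaining reductions are direct.
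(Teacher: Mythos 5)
Your proof is correct, and for \(n>0\) it follows the paper's overall strategy: use intersection theory to show every biholomorphism carries sections of \(\OO{n}\to\Proj{1}\) to sections, deduce that the fibration is intrinsic, and reduce to a fibre-preserving map that is fibrewise affine with constant multiplier lying in \(\OnGroup\). The differences are in the middle steps. The paper computes \(\homology{2}{\OO{n},\Z{}}\) directly by Mayer--Vietoris and uses positivity of intersections of compact complex curves to identify the class of the image of the zero section, whereas you compactify to the Hirzebruch surface and read the same conclusion off its intersection form; these are interchangeable. More substantively, the paper shows the fibres are preserved by an infinitesimal argument --- the tangent lines to sections are exactly the non-vertical directions, so the vertical line field is intrinsic --- while you recover the fibres combinatorially from the incidence relation ``some section passes through both points,'' justified by surjectivity of two-point evaluation, i.e.\ \(\cohomology{1}{\Proj{1},\OO{n-2}}=0\) for \(n\ge 1\). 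Both are sound; yours trades a differentiation argument for an interpolation statement. Your final reduction is also organized slightly differently (you normalize the induced base automorphism first and get affine fibre maps, with \(\lambda\) a global nonvanishing function on \(\Proj{1}\) hence constant; the paper normalizes the zero section first and gets dilations), but the content is identical. Your treatment of \(n=0\), which the paper dismisses as clear, is a correct justification via the characterization of the \(\Proj{1}\)-fibres as the maximal compact connected positive-dimensional analytic subsets.
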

\begin{proof}
For \(n=0\), clearly \(\OO{n}=\Proj{1} \times \C{}\), and the result is clear.
If \(n>0\), then by the Meyer--Vietoris sequence applied to affine charts
\[
\pr{Z,W}=\pr{\frac{1}{z},\frac{w}{z^n}},
\]
the homology groups are those of the zero section:
\[
\homology{*}{\OO{n},\Z{}}=\homology{*}{\Proj{1},\Z{}},
\]
but with intersection \([Z]^2=n\), where \(Z\) is the zero section. 
If \(f \colon \OO{n} \to \OO{n}\) is any biholomorphism, then the holomorphic curve \(Z'=f(Z)\) has homology class a generator of \(\homology{2}{\OO{n},\Z{}}\), so 
\(\left[Z'\right]=\pm \left[Z\right]\).
Deform \(Z\) through compact complex curves, to some other compact complex curve \(Z''\) in the same homology class which does not coincide with \(Z'\). 
Since \(Z'\) and \(Z''\) are compact complex curves,
\[
\left[Z'\right] \cap \left[Z\right] 
= \left[Z'\right] \cap \left[Z''\right] > 0.
\]
Therefore \(\left[Z'\right]=\left[Z\right]\).
The number of intersections of \(Z'\) with any fiber of \(\OO{n} \to \Proj{1}\) is just the degree of \(Z' \to \Proj{1}\), i.e. the integral
\[
\int_{Z'} \Omega,
\]
where \(\Omega\) is the pullback of the area form on \(\Proj{1}\) for the standard metric.
Clearly this depends only on homology class, so \(Z'\) intersects each fiber precisely once,
i.e. \(Z'\) is a holomorphic section of \(\OO{n} \to \Proj{1}\), and therefore is the
graph of a homogeneous polynomial in two variables. 
We can therefore compose our biholomorphism \(f\) with a biholomorphism from the group
\[
\OnGroup
\]
to arrange that \(f\) preserves the zero section, and that the composition  \AtoBtoCtoD{\Proj{1}}{0}{Z}{f}{Z'=Z}{}{\Proj{1}}
is the identity. 
Moreover, \(f\) preserves the subvarieties of \(\OO{n}\) which are the images of holomorphic sections of \(\OO{n} \to \Proj{1}\), i.e. the homogeneous polynomial functions  of two variables.

The tangent lines to the holomorphic sections of \(\OO{n} \to \Proj{1}\) are precisely the complex lines in the tangent planes of \(\OO{n}\) which are not tangent to the fibers of \(\OO{n} \to \Proj{1}\).
Therefore the tangent lines to the fibers of \(\OO{n} \to \Proj{1}\) are also preserved by \(f\), and so the fibers are preserved too.
We can write \(f\of{L,q}=\pr{L,r}\) where \(q, r \in \Sym{n}{L}^*\). 
For fixed
\(L\), this map is a holomorphic map of a complex line, fixing the origin, so a dilation \(f\of{L,q}=\pr{L,c(L)q}\).
This map \(c \colon \Proj{1} \to \C{}\) is holomorphic, so constant. 
Therefore \(f(L,q)=\pr{L,cq}\) and \(f\) lies in the group 
\[
\OnGroup.
\]
\end{proof}

\subsection{Quotient spaces}

\begin{lemma}
The action of \(G=\OnGroup\) on \(X=\OO{n}\) has no quotients.
\end{lemma}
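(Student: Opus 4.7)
The plan is to combine the preceding biholomorphism computation with a centre calculation. By Lemma~\ref{lemma:BiholomorphismGroupOn}, for \(n\ge 1\) the full biholomorphism group of \(X=\OO{n}\) is already \(G=\OnGroup\) itself. Any quotient of \(\pr{X,G}\) is by a discrete group of biholomorphisms of \(X\) that commute pointwise with every element of \(G\); since every such biholomorphism already lies in \(G\), such a group is a discrete subgroup of the centre \(Z\of{G}\). Hence it suffices to show \(Z\of{G}\) is trivial.

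To do this, I would write a general element as \(\pr{g,p}\) with \(g\in\GL{2,\C{}}/\Z{}_n\) and \(p\in\Sym{n}{\C{2}}^*\) and use the product
\[
\pr{g_0,p_0}\pr{g_1,p_1}=\pr{g_0 g_1,\;p_0+p_1\circ g_0^{-1}}.
\]
Centrality of \(\pr{g,p}\) amounts to: (a) \(g\) commutes in \(\GL{2,\C{}}/\Z{}_n\) with every \(g'\), hence \(g\) is the class of a scalar \(\lambda I\); and (b) \(p+p'\circ g^{-1}=p'+p\circ g'^{-1}\) for every \(\pr{g',p'}\in G\). Setting \(p'=0\) gives \(p=p\circ g'^{-1}\) for every \(g'\in\GL{2,\C{}}\), so \(p\) is a \(\GL{2,\C{}}\)-invariant homogeneous polynomial of degree \(n\ge 1\) in two variables, of which there are none nonzero; thus \(p=0\). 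Setting \(p=0\) then gives \(\lambda^{-n}p'=p'\) for all \(p'\in\Sym{n}{\C{2}}^*\), forcing \(\lambda^n=1\), i.e.\ \(\lambda\in\Z{}_n\), so \(g\) is trivial in the quotient \(\GL{2,\C{}}/\Z{}_n\).

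Therefore \(Z\of{G}\) is trivial, and no nontrivial discrete \(\pi\) of biholomorphisms commuting with \(G\) exists, so the action has no quotients. The only thing to watch is the passage from \(n\ge 1\) to both cases \(B\delta{3}\) and \(B\delta{4}\), but the argument is uniform in \(n\); the substantive input is the vanishing of \(\GL{2,\C{}}\)-invariants in \(\Sym{n}{\C{2}}^*\), which is immediate, so there is no serious obstacle.
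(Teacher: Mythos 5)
Your proof is correct, but it takes a genuinely different route from the paper's. The paper's argument is the same one used for the other quotient-free actions in Section~\ref{section:QFA}: every point of \(X=\OO{n}\) is the unique fixed point of some element of \(G\) (for instance a suitable non-semisimple element of \(\GL{2,\C{}}/\Z{}_n\) combined with a translation by a polynomial), so any biholomorphism commuting with \(G\) preserves each such one-point fixed locus and is therefore the identity; no biholomorphism-group or centre computation is needed. You instead invoke Lemma~\ref{lemma:BiholomorphismGroupOn} to place every commuting biholomorphism inside \(G\), reducing the claim to \(Z\of{G}=1\), which you then verify by a clean algebraic computation in the semidirect product. What your route buys is that, once the (harder) biholomorphism-group lemma is granted, the rest is pure algebra and also records the useful fact that \(G\) is centreless; what the paper's route buys is independence from that lemma and uniformity with the other cases. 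Two minor points in your write-up: the step ``\(g\) central in \(\GL{2,\C{}}/\Z{}_n\) implies \(g\) is the class of a scalar'' uses connectedness of \(\GL{2,\C{}}\) (the commutator map \(h\mapsto ghg^{-1}h^{-1}\) is continuous with values in the discrete group \(\Z{}_n\cdot I\), hence constant), though in fact your later condition \(p'\circ g^{-1}=p'\) for all \(p'\in\Sym{n}{\C{2}}^*\) already forces \(g\in\Z{}_n\cdot I\) on its own; and the vanishing of \(\GL{2,\C{}}\)-invariants in \(\Sym{n}{\C{2}}^*\) is most quickly seen by testing against a nontrivial scalar matrix. Neither affects correctness.
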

\begin{proof}
For each point of \(X\), there is an element \(g \in G\) with that point as its unique fixed point.
Therefore every biholomorphism commuting with \(G\) fixes every point of \(X\): \(X\) has no quotients.
\end{proof}

Denote by \(\pm^n\) the group \(\pm 1\) if \(n\) is even, and \(1\) if \(n\) is odd.
Lie denotes the action of the subgroup 
\[
\SOnGroup \subset \OnGroup
\] 
on \(X=\OO{n}\) by \(B\delta{3}\).

\begin{lemma}
The action of \(G=\SOnGroup\) on \(X=\OO{n}\) has no quotients.
\end{lemma}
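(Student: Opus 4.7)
The plan is to mirror the argument of the preceding lemma: show that every biholomorphism of $X=\OO{n}$ commuting with $G=\SOnGroup$ is the identity, so that no nontrivial discrete group of such biholomorphisms can act freely. The only extra work compared with the $\GL$ case is that a semisimple element of $G$ typically has two fixed points on $\OO{n}$ (one in each of its two eigenline fibers over $\Proj{1}$) rather than one, so for each point $P_{0}\in X$ I will produce \emph{two} elements of $G$ whose fixed loci meet in exactly $\br{P_{0}}$.

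Fix $P_{0}=(L_{0},q_{0})\in\OO{n}$. Choose any line $L_{1}\in\Proj{1}$ with $L_{1}\ne L_{0}$, and pick a semisimple $g_{0}\in\SL{2,\C{}}$ whose fixed lines on $\Proj{1}$ are exactly $L_{0},L_{1}$, with eigenvalues $\mu,\mu^{-1}$ satisfying $\mu^{n}\ne 1$; this is possible since the $n$-th roots of unity in $\C{\times}$ form a finite set. The action of $g_{0}$ on $\Sym{n}{L_{0}}^{*}$ is multiplication by $\mu^{-n}\ne 1$, so on the fiber over $L_{0}$ the fixed-point equation $\mu^{-n}q + p_{0}|_{L_{0}} = q$ has a unique solution once $p_{0}|_{L_{0}}$ is chosen. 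Using surjectivity of the restriction $\Sym{n}{\C{2}}^{*}\to\Sym{n}{L_{0}}^{*}$, I take $p_{0}$ with $p_{0}|_{L_{0}}=(1-\mu^{-n})q_{0}$. The resulting element $(g_{0},p_{0})\in G$ fixes $P_{0}$, and since $(g_{0},p_{0})$ can only fix points lying over lines fixed by $g_{0}$, its fixed locus on $\OO{n}$ consists of at most two points: $P_{0}$ and possibly a single point $P_{1}$ lying over $L_{1}$.

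Repeating the construction with an auxiliary line $L_{1}'\in\Proj{1}$ distinct from both $L_{0}$ and $L_{1}$ produces a second element $(g_{0}',p_{0}')\in G$ whose fixed locus consists of $P_{0}$ together with at most one point $P_{1}'$ lying over $L_{1}'$. Since $L_{1}\ne L_{1}'$, the two fixed loci intersect in exactly $\br{P_{0}}$. Any biholomorphism of $X$ commuting with $G$ preserves each of these fixed loci and hence their intersection, so it fixes $P_{0}$; as $P_{0}$ was arbitrary, it is the identity. Consequently no nontrivial discrete group of biholomorphisms of $X$ commuting with $G$ acts freely on $X$, and $(X,G)$ admits no nontrivial quotient. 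The one step where care is needed is verifying that $(g_{0},p_{0})$ has no fixed points lying over lines outside $\br{L_{0},L_{1}}$, which is immediate from the fact that $g_{0}$ fixes no other lines of $\Proj{1}$.
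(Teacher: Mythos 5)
Your proof is correct, but it takes a genuinely different route from the paper's. The paper leans on its earlier computation of \(\Bihol{\OO{n}}=\OnGroup\): a commuting biholomorphism must then have the form \(\pr{I,p}\), and since its fixed locus (the fibers over the zeroes of \(p\)) is \(G\)-invariant, transitivity forces it to be empty, contradicting the fact that every nonzero homogeneous polynomial of degree \(n\ge 1\) has zeroes. You instead argue entirely with fixed-point loci of elements of \(G\) itself, in the style of the paper's ``quotient-free actions'' section and of the preceding lemma for \(\OnGroup\); you correctly identify why the \(\SL{2,\C{}}\) case cannot be handled by a single element (with \(\det g=1\) one cannot make one eigenline fiber fixed-point-free while keeping a unique fixed point over the other, since the two fiber eigenvalues \(\mu^{\pm n}\) are simultaneously equal to or different from \(1\)), and you repair this by intersecting the two-point fixed loci of two semisimple elements sharing only the eigenline \(L_0\). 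The computations check out: the fiber maps are affine with linear part \(\mu^{\mp n}\ne 1\), so there is exactly one fixed point over each of the two fixed lines, and the intersection of the two fixed loci is exactly \(\br{P_0}\). What your approach buys is independence from the determination of the full biholomorphism group of \(\OO{n}\); what the paper's approach buys is brevity, since that lemma is already in hand. One cosmetic remark: you should also note \(\mu\ne\pm1\) so that \(g_0\) has exactly the two fixed lines \(L_0,L_1\) (this is not automatic from \(\mu^n\ne1\) when \(n\) is odd and \(\mu=-1\)), but choosing \(\mu\) generically handles this.
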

\begin{proof}
Every point of \(\Proj{1}\) is fixed by some element of \(G\).
Therefore every biholomorphism of \(X\) commuting with \(G\) fixes every point of \(\Proj{1}\).
Every biholomorphism of \(X\) is an element of \(\OnGroup\), say \(\pr{g,p}\).
Commuting with every element of \(G\), we must have \(\pr{g,p}=\pr{I,p}\).
The fixed points of \(\pr{I,p}\) are permuted by all elements of \(G\), and therefore there are no fixed points, i.e. the homogeneous polynomial \(p\) has no zeroes, a contradiction; so \(\pr{X,G}\) has no quotients.
\end{proof}

\section{%
\texorpdfstring%
{$B\gamma{4}$: restricting line bundles to $\C{}$}%
{Bgamma4: restricting line bundles to C2}%
}%
%Olver number 1.9}
Pick a positive integer \(n\).
Let \(G^{\infty} \subset \GL{2,\C{}}/\Z{}_n\) be the subgroup fixing \(\infty \in \Proj{1}\), i.e. the group of \(2 \times 2\) complex matrices of the form
\[
\begin{pmatrix}
a & b \\
0 & d
\end{pmatrix}
\]
modulo roots of unity, i.e. modulo the matrices \(\lambda I\) where \(\lambda^n=1\) and \(\lambda \in \C{}\).
Consider the group
\[
G=G^{\infty} \rtimes \Sym{n}{\C{2}}^* 
\subset 
\OnGroup
\]
This subgroup preserves the domain of the affine chart, and acts transitively there.
The stabilizer of a certain point is the subgroup of pairs \((g,p) \in G^{\infty} \rtimes \Sym{n}{\C{2}}^*\) where
\[
g=
\begin{pmatrix}
a & 0 \\
0 & d
\end{pmatrix}
\]
with and \(p(0,1)=0\).
We let \(X \subset \OO{n}\) be the orbit of \(G\).

Let us be more explicit.
Take homogeneous coordinates 
\[
\begin{bmatrix}
Z_1 \\
Z_2
\end{bmatrix} \in \Proj{1}.
\]
Trivialize \(\OO{n}\) over the complement of \(\infty \in \Proj{1}\) by 
\[
\pr{z,w} \in \C{2} \mapsto \pr{L,q} \in \OO{n},
\]
where \(L\) is the line \(Z_1 = z Z_2\) and \(q=w \left. Z^n_2\right|_{L}\).
These \((z,w)\) are global coordinates on \(X\) so \(X=\C{2}\).
In these coordinates, elements 
\[
\pr{g,p} \in G
\] 
with 
\[
g = 
\begin{pmatrix}
a & b \\
c & d
\end{pmatrix}
\]
act on \(X\) by 
\[
\pr{g,0}\of{z,w}
=
\pr{\frac{az+b}{cz+d},\frac{w}{\left(cz+d\right)^n}}
\]
and if 
\[
p\of{Z_1,Z_2}
=
\sum_{i+j=n} c_{ij} Z_1^i Z_2^j,
\]
then
\[
(I,p)(z,w) = \pr{z,w+p\of{z,1}}
= \pr{z,w+\sum_{i+j=n} c_{ij}z^i}.
\]

\subsection{Quotient spaces}

The action of \(G\) on \(X=\C{2}\) includes all translations, so the biholomorphisms that commute with \(G\) are translation invariant, so are translations.
Only the translation by \(0\) commutes with all rescalings
\[
\pr{z,w} \mapsto \pr{\frac{z}{d},\frac{w}{d^n}}.
\]
Therefore there are no quotients of this action.

\section{%
\texorpdfstring%
{$B\gamma{1}$ and $B\gamma{2}$}%
{Bgamma1 and Bgamma2}%
}%
%Olver number 1.7}

\subsection{Definition}

Consider the subgroup \(H^{\infty} \subset \GL{2,\C{}}/\Z{}_n\) of elements which fix the point \(\infty \in \Proj{1}\).
Let \(L_{\infty} \subset \C{2}\) be the line consisting of vectors
\[
\begin{pmatrix}
Z_1 \\
0
\end{pmatrix}
\in \C{2}.
\]
The elements \(g \in H^{\infty}\) act on the 1-dimensional vector space \(\Sym{n}{L_{\infty}}* \subset \C{2}\) with an eigenvalue, say \(\lambda_1\), and act on \(\Sym{n}{\C{2}/L_{\infty}}^*\) with another eigenvalue, say \(\lambda_2\).
Map \(g \in H^{\infty} \mapsto \pr{\lambda_1,\lambda_2} \in \C{\times} \times \C{\times}\); this map is onto with connected 1-dimensional kernel.
As in section~\vref{section:C8}, for each \(p \in \Proj{1}\) we have a connected complex subgroup \(H_p \subset \C{\times} \times \C{\times}\).
Each connected subgroup \(H_p \subset \C{\times} \times \C{\times}\) has preimage a connected subgroup \(H'_p \subset H^{\infty}\).
In turn this yields a connected subgroup \(G=G_p=H'_p \rtimes \Sym{n}{\C{2}}^*\) acting transitively on the affine plane \(X=\C{2} \subset \OO{n}\).
Writing out the group action, we find that if \(p=1 \in \Proj{1}\) then \(G_1=\rG{D}\) where \(D=n[0]\), so a special case of \(B\beta{2}\).

We want to connect to Lie's notation \cite{Lie:GA:5} p .767--773.
Pick a complex number \(c\) and an integer \(n \ge 1\). (Let \(k=n+1\) to match Olver's notation \cite{Olver:1995} p. 472, and \(r=n+1\) to match Lie's notation.)  
Consider the group of \(2 \times 2\) complex matrices of the form
\[
\mu 
\begin{pmatrix}
e^{\lambda \pr{1-\frac{c}{n}}} & b \\
0 & e^{-\lambda c/n}
\end{pmatrix}
\]
where \(\mu^n=1\), which is parameterized by arbitrary choices of the two complex numbers \(b\) and \(\lambda\).
This group contains the group of all rescalings by \(n\)-th roots of \(1\), so let \(C^n_c\) be the quotient by the \(n\)-th roots of \(1\).
Clearly \(C^n_c \subset \GL{2,\C{}}/\Z{}_n\), so we have a group
\[ 
C^n_c \rtimes \Sym{n}{\C{2}}^* \subset \OnGroup
\]
acting on \(\OO{n}\), fixing the line 
\[
\begin{bmatrix}
1 \\
0
\end{bmatrix} = \infty \in \Proj{1}.
\]
The group \(C^n_c \rtimes \Sym{n}{\C{2}}^*\) acts on \(\OO{n}\) preserving the affine plane \(X=\C{2} \subset \OO{n}\), the unique open orbit of this group. 
In our previous notation \(C^n_c=H_p\) where \(p=(c-n)/c\), so finite values of \(c\) correspond precisely to values of \(p\ne 1\).
The stabilizer of the origin of \(X\) is the group of \((g,p)\) of the form 
\[
g=
\mu
\begin{pmatrix}
e^{\lambda \pr{1-\frac{c}{n}}} & 0 \\
0 & e^{-\lambda c/n}
\end{pmatrix}
\]
where \(\mu^n=1\) and \(p\pr{0,1}=0\).
Any two different values of \(c\) give nonconjugate subroups \(C^n_c \subset \GL{2,\C{}}/\Z{}_n\).

\subsection{Quotient spaces}

The action of \(G\) on \(X=\C{2}\) includes all translations, so the biholomorphisms that commute with \(G\) must be translation invariant, so must be translations.
If \(c \ne 0\), (i.e. the group action in \(B{\gamma}1\)), only the translation by \(0\) commutes with all rescalings
\[
\pr{z,w} \mapsto \pr{e^{\lambda}z,e^{\lambda c}w}.
\]
Therefore there are no quotients of this action.
If \(c = 0\), (i.e. the group action in \(B{\gamma}2\)), only the translations by \(\partial_w\) commute with all rescalings
\[
\pr{z,w} \mapsto \pr{e^{\lambda}z,w}.
\]
The center of \(G\) consists precisely in these translations.
So the quotients are \(\pr{G',X'}=\pr{G/\pi,X/\pi}\) where \(\pi = \left\{0\right\} \times \Lambda\) for any discrete subgroup \(\Lambda \subset \C{}\).

\section{%
\texorpdfstring%
{$B\gamma{3}$}%
{Bgamma3}%
}%
%Olver number 1.8} 

\subsection{Definition}

Pick an integer \(n \ge 1\), which we will also write as \(n=k\) to match Olver's notation \cite{Olver:1995} p. 472.
Then consider the subgroup \(C'_n\) of elements 
\[
(g,p) \in \OnGroup
\]
of the form
\[
g=
\begin{pmatrix}
1 & b \\
0 & e^{-\lambda}
\end{pmatrix}
\]
and
\[
p\of{Z_1,Z_2}
=
\lambda Z_1^n
+
Z_2 r\of{Z_1,Z_2}
\]
with
\[
r\of{Z_1,Z_2} = 
\sum_{i+j=n-1} c_{ij} Z_1^i Z_2^j \in \Sym{n-1}{\C{2}}^*.
\]
Once again this subgroup acts preserving the domain of the affine plane \(X=\C{2} \subset \OO{n}\), where it acts transitively.
The stabilizer of a point is the subgroup \(b=0\) and \(r(0,1)=0\).

\subsection{Quotient spaces}
The action of \(G\) on \(X=\C{2}\) includes all translations, so the biholomorphisms that commute with \(G\) must be translation invariant, so must be translations.
No translations commute with all elements of \(G\) except translation by \(0\), so this action has no quotients.

%ok to here

%\begingroup
\newpage
\section{The big table}
%\small

\newcommand{\lastaction}[5]%\lastaction{Lie name}{surface}{group}{stabilizer subgroup}{conditions}
{%%
#1 & #2 & #3 %& #4 & #5
}%%

\newcommand{\action}[5]%\action{Lie name}{surface}{group}{stabilizer subgroup}
{%%
\lastaction{#1}{#2}{#3}{#4}{#5}
\betweenEntries
}%%

\newcommand*{\sub}{\quad }

\begin{longtable}{@{}AAA@{}}
\caption[Complex homogeneous surfaces]{The classification of connected complex homogeneous surfaces, together with all faithful holomorphic actions of connected complex Lie groups on each such surface.
Each action is labelled according to Lie's table of holomorphic Lie algebra action germs on surfaces  \cite{Lie:GA:5} p .767--773 (also see \cite{Mostow:1950}, \cite{Olver:1995} p. 472).
The constants appearing in the table are constrained as follows: \(n\) is any integer \(\ge 1\), \(D\) is any effective divisor on \(\C{}\) with \(\deg D \ge 2\) (perhaps with some additional conditions detailed above), \(\alpha\) can be any complex number except that \(\alpha \ne 1\), \(\beta\) can be any complex number, \(\lambda\) can be any complex number with \(|\lambda|<1\), \(\Delta\) can be any discrete subgroup in either \(\C{}\) or \(\C{2}\) as required, and \(\Lambda\) is a lattice in \(\C{}\) or \(\C{2}\) as required.
Write \(\uAff{\C{}} \to G' \to \Aff{\C{}}\) to mean a connected complex Lie group which is a covering group of the group of complex affine transformations of \(\C{}\).
Let \(E_{\tau}=\C{}/\Z{}\left[1,\tau\right]\) and \(\omega=e^{\pi i /3}\).
Further information on this table is given in section~\vref{section:Lies.classification}.} \label{table:complexHomogeneousSurfaces}\\
\toprule
%\rowcolor[gray]{.93}
\textrm{Action} &
\textrm{Surface} &
\textrm{Group} %&
%\textbf{Subgroup} &
%\textbf{Conditions}
\\
\midrule
\endfirsthead
\multicolumn{3}{l}%
{{\tablename\ \thetable{}: continued}} \\
\toprule
\textrm{Action} &
\textrm{Surface} &
\textrm{Group} %&
%\textbf{Subgroup} &
%\textbf{Conditions}
\\
\midrule
\endhead
\bottomrule
\multicolumn{3}{r}{\text{continues on next page\ldots}} \\
\endfoot
\bottomrule
\endlastfoot
\action{A1}{\Proj{2}}{\PSL{3,\C{}}}
{\left[\begin{smallmatrix}
 a^0_0 & a^0_1 & a^0_2 \\
 0     & a^1_1 & a^1_2 \\
 0     & a^2_1 & a^2_2
\end{smallmatrix}\right]}{}
\action{A2}{\C{2}}{\GL{2,\C{}} \rtimes \C{2}}{\GL{2,\C{}}}{}
\action{A3}{\C{2}}{\SL{2,\C{}} \rtimes \C{2}}{\SL{2,\C{}}}{}
\action{B\beta{1}}{\C{2}}{G_D}{H_D}{\deg D \ge 2}
\action{\sub{B\beta{1}A0}}{\C{} \times \pr{\C{}/\Delta}}{G_D}{H_D}{\deg D \ge 2}
\action{\sub{B\beta{1}A1}}{\C{} \times \pr{\C{}/\Delta}}{G_D/\Delta}{H_D/\Delta}{\deg D \ge 2}
\action{\sub{B\beta{1}B0}}{\C{\times} \times \C{}}{G_D/\left<\pr{n,0}\right>}{H_D}{\deg D \ge 2}
\action{\sub{B\beta{1}B1}}{\C{\times} \times \C{}}{G_D}{H_D}{\deg D \ge 2}
\action{\sub{B\beta{1}C}}{\C{\times} \times \C{}}{G_D}{H_D}{\deg D \ge 2}
\action{\sub{B\beta{1}D}}{\C{\times} \times \C{\times}}{G_D/\left<\pr{n,0},\pr{0,1}\right>}{H_D}{\deg D \ge 2}
\action{\sub{B\beta{1}E}}{\C{\times} \times \C{\times}}{G_D/\left<\pr{n,s},\pr{0,1}\right>}{H_D}{\deg D \ge 2}
\action{\sub{B\beta{1}F}}{\C{\times} \to X' \to \C{\times}}{G_D}{H_D}{\deg D \ge 2}
\action{\sub{B\beta{1}G}}{\C{\times} \times \pr{\C{}/\Lambda}}{G_D}{H_D}{\deg D \ge 2}
\action{\sub{B\beta{1}H}}{\C{\times} \times \pr{\C{}/\Lambda}}{G_D}{H_D}{\deg D \ge 2}
\action{\sub{B\beta{1}I}}{\C{}/\Lambda \to X' \to \C{\times}}{G_D}{H_D}{\deg D \ge 2}
\action{B\beta{2}}{\C{2}}{\rG{D}}{\rH{D}}{\deg D \ge 2}
\action{\sub{B\beta{2}'}}{\C{\times} \times \C{}}{\rG{D}/\left<\pr{n,1,0}\right>}{\rH{D}/?}{\deg D \ge 2}
\action{B\gamma{1}}{\C{2}}%
{\Set{
e^{-a (n+\alpha)/n}
\twobytwo{e^a}{b}{0}{1}|a, b \in \C{}
}
\rtimes \Sym{n}{\C{2}}^*}%
{(g,p), b=0, p(1,0)=0}{\alpha \ne 1}
\action{B\gamma{2}}{\C{2}}%
{\Set{
e^{-a(n+1)/n}\twobytwo{e^a}{b}{0}{1}|a,b \in \C{}
}
\rtimes \Sym{n}{\C{2}}^*}%
{(g,p), b=0, p(1,0)=0}{}
\action{\sub{B\gamma{2}'}}{\C{} \times \pr{\C{}/\Delta}}%
{\Set{
e^{-a(n+1)/n}\twobytwo{e^a}{b}{0}{1}|a,b \in \C{}
}
\rtimes \pr{\Sym{n}{\C{2}}^*/\Delta}}%
{(g,p), b=0, p(1,0)=0}{}
\action{B\gamma{3}}{\C{2}}%
{%%
\Set{
\pr{
\twobytwo{1}{b}{0}{e^{-a}}
,Z_2 r\of{Z_1,Z_2}+a Z_1^n}
|
a,b \in \C{}, 
\deg r=n-1
}
}%%
{b=0, r(0,1)=0}{}
\action{B\gamma{4}}{\C{2}}%
{%%
\Set{
\twobytwo{*}{*}{0}{*}
/\Z{}_n
}
 \rtimes \Sym{n}{\C{2}}^*%
}%%
{(g,p), b=0, p(0,1)=0}{}
\action{B\delta{1}}{\C{2} \setminus 0}{\SL{2,\C{}}}%
{\twobytwo{1}{b}{0}{1}}{}
\action{\sub{B\delta{1}'}}{\pr{\C{2} \setminus 0}/z \sim \lambda z}{\SL{2,\C{}}}%
{%%
\twobytwo{1}{b}{0}{1}}{|\lambda|<1}
\action{B\delta{2}}{\C{2} \setminus 0}{\GL{2,\C{}}}%
{\twobytwo{1}{b}{0}{c}}%
{}
\action{\sub{B\delta{2}'}}{\pr{\C{2} \setminus 0}/z \sim \lambda z}{\GL{2,\C{}}/\left<\lambda I\right>}%
{\twobytwo{1}{b}{0}{c}}%
{|\lambda|<1}
\action{B\delta{3}}{\OO{n}}%
{\pr{\SL{2,\C{}}/\pm^n} \rtimes \Sym{n}{\C{2}}^*}%
{\left(
\twobytwo{a}{b}{0}{\frac{1}{a}}, p\right),
p(1,0)=1-\frac{1}{a^n}}{}%%%
\action{B\delta{4}}{\OO{n}}%
{\OnGroup}%
{\left(
\twobytwo{a}{b}{0}{d}, p\right),
p(1,0)=1-\frac{1}{a^n}}{}%%%
\action{C2}{\C{2}}{\C{} \times \Aff{\C{}}}{\br{0} \times \C{\times}}{}%%%
\action{\sub{C2}'}{\pr{\C{}/\Delta} \times \C{}}{\pr{\C{}/\Delta} \times \Aff{\C{}}}{\br{0} \times \C{\times}}{}%%%
\action{C3}{\C{2}}{\Aff{\C{}} \times \Aff{\C{}}}{\C{\times} \times \C{\times}}{}%%%
\action{C5}{\Proj{1} \times \C{}}{\PSL{2,\C{}} \times \C{}}%
{\twobytwob{a}{b}{0}{a^{-1}}}{}%%%
\action{\sub{C5}'}{\Proj{1} \times \pr{\C{}/\Delta}}{\PSL{2,\C{}} \times \pr{\C{}/\Delta}}%
{\twobytwob{a}{b}{0}{a^{-1}}}{}%%%
\action{C6}{\Proj{1} \times \C{}}{\PSL{2,\C{}} \times \pr{\C{\times} \rtimes \C{}}}%
{\twobytwob{a}{b}{0}{a^{-1}} \times \C{\times}}{}%%%
\action{C7}{\Proj{1} \times \Proj{1}}{\PSL{2,\C{}} \times \PSL{2,\C{}}}%
{\twobytwob{a}{b}{0}{\frac{1}{a}}\times\twobytwob{c}{d}{0}{\frac{1}{c}}}{}%%%
\action{C8}{\C{2}}%
{\Set{\twobytwo{e^t}{0}{0}{e^{\alpha t}}| t \in \C{}} \rtimes \C{2}}%
{\twobytwo{e^t}{0}{0}{e^{\alpha t}}}%
{\alpha \ne 1}%%%
\action{C9}{\Proj{1} \times \Proj{1} \setminus \Delta}{\PSL{2,\C{}}}%
{\twobytwob{a}{0}{0}{\frac{1}{a}}}{}%%%
\action{\sub{C9}'}{\Proj{2} \setminus \pr{b^2=4ac}}{\PSL{2,\C{}}}%
{\twobytwob{a}{0}{0}{\frac{1}{a}},\twobytwob{0}{1}{1}{0}}{}%%%
\action{D1}{\C{2}}{\C{2}}{0}{}%%%
\action{\sub{D1_1}}{\C{\times} \times \C{}}{\C{\times} \times \C{}}{0}{}
\action{\sub{D1_2}}{\C{\times} \times \C{\times}}{\C{\times} \times \C{\times}}{0}{}
\action{\sub{D1_3}}{\pr{\C{}/\Delta} \times \C{}}{\pr{\C{}/\Delta} \times \C{}}{0}{}
\action{\sub{D1_4}}{\C{\times} \times \pr{\C{}/\Delta}}{\C{\times} \times \pr{\C{}/\Delta}}{0}{}
\action%
{\sub{D1_5}}%
{\C{\times} \to X' \to \C{}/\Delta}%
{\C{\times} \to G' \to \C{}/\Delta}%
{0}%
{}
\action{\sub{D1_6}}{\C{2}/\Lambda}{\C{2}/\Lambda}{0}{}
\action{D2}{\C{2}}{\uAff{\C{}}}{0}{}%%%
\action{\sub{D2_1}}{\C{} \times \C{\times}}{\uAff{\C{}}}{(0,n)}{}%%%
\action{\sub{D2_2}}{\C{} \times \pr{\C{}/\Lambda}}{\uAff{\C{}}}{(0,n+m\tau)}{E \text{ any elliptic curve}}%%%
\action{\sub{D2_3}}{\C{\times} \times \C{}}{\uAff{\C{}}}{?}{}%%%
\action{\sub{D2_4}}{\C{\times} \times \C{\times}}{\uAff{\C{}} \to G' \to \Aff{\C{}}}{?}{}%%%
\action{\sub{D2_5}}{\C{\times} \times E_{\tau}}{\uAff{\C{}} \to G' \to \Aff{\C{}}}{?}{}%%%
\action{\sub{D2_6}}{\C{\times} \times \C{}}{\uAff{\C{}} \to G' \to \Aff{\C{}}}{?}{}%%%
\action{\sub{D2_7}}{\C{\times} \to X' \to \C{\times}}{\uAff{\C{}} \to G' \to \Aff{\C{}}}{?}{}%%%
\action{\sub{D2_8}}{E_{\tau} \to X' \to \C{\times}}{\uAff{\C{}} \to G' \to \Aff{\C{}}}{?}{}%%%
\action{\sub{D2_9}}{E_i \to X' \to \C{\times}}{\uAff{\C{}} \to G' \to \Aff{\C{}}}{?}{}%%%
\action{\sub{D2_{10}}}{E_{\omega} \to X' \to \C{\times}}{\uAff{\C{}} \to G' \to \Aff{\C{}}}{?}{}%%%
\action{\sub{D2_{11}}}{E_{\omega} \to X' \to \C{\times}}{\uAff{\C{}} \to G' \to \Aff{\C{}}}{?}{}%%%
\action{\sub{D2_{12}}}{E_{\omega} \to X' \to \C{\times}}{\uAff{\C{}} \to G' \to \Aff{\C{}}}{?}{}%%%
\action{\sub{D2_{13}}}{E_{\omega} \to X' \to \C{\times}}{\uAff{\C{}} \to G' \to \Aff{\C{}}}{?}{}%%%
\action{\sub{D2_{14}}}{\pr{\C{}/\Lambda} \times \C{}}{\uAff{\C{}} \to G' \to \Aff{\C{}}}{?}{}%%%
\lastaction{D3}{\C{2}}{\C{\times} \rtimes \C{2}}{\C{\times}}{}
\end{longtable}

%\endgroup

\bibliographystyle{amsplain}
\bibliography{complex-homogeneous-surfaces}
\end{document}